\numberwithin{equation}{section}
\theoremstyle{definition}
\newtheorem{definition}{Definition}[section]
\newtheorem{remark}[definition]{Remark}
\newtheorem{example}[definition]{Example}
 \theoremstyle{plain}
\newtheorem{theorem}[definition]{Theorem}
\newtheorem{lemma}[definition]{Lemma}
\newtheorem{proposition}[definition]{Proposition}
\newtheorem{corollary}[definition]{Corollary}
\DeclareMathOperator{\Id}{Id}
\DeclareMathOperator{\diam}{diam}
\DeclareMathOperator{\card}{card}
\renewcommand{\mathbb}{\mathsf}
\begin{document}



\title[Products of metric spaces...]{Products of metric spaces,\\
covering numbers, packing numbers\\ and characterizations of
ultrametric spaces}

\author{Oleksiy Dovgoshey and Olli Martio}

\address{Institute of Applied Mathematics and Mechanics of NASU,
 R.~Luxemburg str. 74, Donetsk 83114,
Ukraine}

\email{aleksdov@mail.ru}

\address{Department of Mathematics and Statistics, University of Helsinki,
 P.O. Box 68 FI-00014 University of
Helsinki, Finland}

\email{olli.martio@helsinki.fi}

\begin{abstract} We describe some Cartesian products of metric spaces and
find conditions under which products of ultrametric spaces are
ultrametric.
\end{abstract}

\keywords{Cartesian products of metric
 spaces, ultrametric spaces, $\varepsilon$-entropy,
 $\varepsilon$-capacity}

\subjclass[2000]{54E35}

\maketitle

\section{Introduction}

Let $(X,d)$ be a metric space. The closed balls with a center
$c\in X$ and radius $r,\ 0<r<\infty$, are denoted by
\begin{equation*}
B(c,r)=B_d(c,r)=\{x\in X:d(x,c)\leq r\}.
\end{equation*}
Let $W$ be a subset of $X$ and let $\varepsilon>0$. A set
$C\subseteq X$ is an \textit{$\varepsilon$-net} for $W$ if
\begin{equation*}
W\subseteq\bigcup_{c\in C}B(c,\varepsilon).
\end{equation*}
A set $W\subseteq X$ is called {\it totally bounded (or
precompact)} if for every $\varepsilon>0$ there is a finite
$\varepsilon$-net for $W$.  The {\it covering number} of a totally
bounded set $W\subseteq X$ is the smallest cardinality of subsets
of $W$ which are $\varepsilon$-nets for $W$. A set $A\subseteq X$
is called {\it$\varepsilon$-distinguishable} if
$d(x,y)>\varepsilon$ for every distinct points $x,y\in A$,
\cite{KolTi}. The {\it packing number} of a precompact set
$W\subseteq X$ is the maximal cardinality of the
$\varepsilon$-distinguishable sets $A\subseteq W$.

We denote by $\mathcal N_\varepsilon(W)$ and by $\mathcal
M_\varepsilon(W)$ the covering number and, respectively, the
packing number of a totally bounded set $W\subseteq X$. These
quantities have been invented by Kolmogorov \cite{Kol1} in order
to classify compact metric sets. Note that the function
$\log_2\mathcal N_\varepsilon(W)$ is the so-called {\it metric
entropy}  and it has been widely applied in approximation theory,
geometric functional analysis, probability theory and complexity
theory, see, for example, \cite{KolTi,Lo2, CarSt,EdTri}.

A main general fact about packing and covering numbers is the
simple double inequality
\begin{equation}\label{eq1.1}
\mathcal M_{2\varepsilon}(W)\leq\mathcal N_\varepsilon(W)\leq
\mathcal M_\varepsilon(W).
\end{equation}
In the second section of this paper we consider some transfinite
generalizations of covering numbers and packing ones and obtain a
more exact version of inequality \eqref{eq1.1}, see Lemma
\ref{l:3.2}. It implies the characterization of ultrametric spaces
as spaces for which packing numbers equal covering numbers. In the
third and fourth sections we introduce some ``natural'' metrics on
the products of metric spaces and discuss conditions under which
the products of ultrametric spaces are ultrametric.

\section{The equality between covering numbers\\ and packing numbers}

Let $(X,d)$ be a metric space. Denote by $t_0=t_0(d)$ the supremum
of positive numbers $t$ for which the function
$(x,y)\longmapsto(d(x,y))^t$ is a metric on $X$. This quantity has
the following characterization, see \cite{DoMa2}.

\begin{lemma}
\label{l:1.1} Let $x,y$ and $z$ be points in a metric space
$(X,d)$. If the inequality
\begin{equation}  \label{1.1}
\max\{d(x,z), d(z,y)\}<d(x,y)
\end{equation}
holds, then there exists a unique solution $s_0\in[1,\infty[$ of
the equation
\begin{equation}  \label{1.2}
(d(x,y))^s=(d(x,z))^s+(d(z,y))^s.
\end{equation}
\end{lemma}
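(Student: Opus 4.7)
Set $a=d(x,z)$, $b=d(z,y)$, $c=d(x,y)$. The hypothesis \eqref{1.1} gives $c>\max\{a,b\}$, and the triangle inequality gives $a+b\ge c$. In particular $c>0$, and neither $a$ nor $b$ can vanish: if $a=0$ then $x=z$, forcing $b=c$ and contradicting $b<c$, and similarly for $b$. So $a,b\in(0,c)$, hence $a/c,\,b/c\in(0,1)$.

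Rather than work with \eqref{1.2} directly, I would divide by $c^s$ and study the auxiliary function
\begin{equation*}
g(s)=\left(\frac{a}{c}\right)^{s}+\left(\frac{b}{c}\right)^{s},\qquad s\in[1,\infty[.
\end{equation*}
A number $s_0$ solves \eqref{1.2} if and only if $g(s_0)=1$. Since $a/c$ and $b/c$ both lie in $(0,1)$, each summand is a continuous, strictly decreasing function of $s$, so $g$ is continuous and strictly decreasing on $[1,\infty[$.

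The existence and uniqueness now follow from the intermediate value theorem together with strict monotonicity: at the left endpoint $g(1)=(a+b)/c\ge 1$ by the triangle inequality, while $g(s)\to 0$ as $s\to\infty$ because $\max\{a/c,b/c\}<1$. Hence there is exactly one $s_0\in[1,\infty[$ with $g(s_0)=1$, equivalently with \eqref{1.2}; one has $s_0=1$ precisely in the degenerate case $a+b=c$.

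There is no real obstacle here. The only point that needs care is checking that $a,b>0$ (so that $a/c,b/c\in(0,1)$ and the two terms of $g$ are genuinely strictly decreasing), which is why I isolated that observation at the start; everything else is a one-line application of monotonicity plus the intermediate value theorem.
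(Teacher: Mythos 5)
Your argument is correct and is essentially the standard one: the paper itself gives no proof of Lemma \ref{l:1.1}, deferring to \cite{DoMa2}, where the same normalization $g(s)=(a/c)^s+(b/c)^s$ and the monotonicity-plus-intermediate-value argument is used. Your preliminary check that $a,b>0$ (so that both terms are strictly decreasing) is exactly the point that needs care, and you handle it properly.
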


For points $x,y$ and $z$ in $X$ write
\begin{equation}  \label{1.3}
s(x,y,z):=%
\begin{cases}
s_0 & \text{if (\ref{1.1}) holds } \\
+\infty & \text{otherwise}%
\end{cases}%
\end{equation}
where $s_0$ is the unique root of equation \eqref{1.2}.

\begin{proposition}
\label{p:1.2'} The equality
\begin{equation*}
t_0(d)=\inf\{s(x,y,z):x,y,z\in X\}
\end{equation*}
holds in every metric space $(X,d)$.
\end{proposition}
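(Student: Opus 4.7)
The plan is to decouple the question: fix a triple $(x,y,z)\in X^3$ and investigate for which $t>0$ the triangle inequality $d(x,y)^t\le d(x,z)^t+d(z,y)^t$ holds. Once this pointwise question is controlled by $s(x,y,z)$, the proposition will follow by intersection over all triples.

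Concretely, I would introduce $\varphi_{x,y,z}(t):=d(x,z)^t+d(z,y)^t-d(x,y)^t$. In the easy case $\max\{d(x,z),d(z,y)\}\ge d(x,y)$, one of the two summands on the right of the triangle inequality already dominates $d(x,y)^t$, so $\varphi_{x,y,z}(t)\ge 0$ for every $t>0$; this matches the convention $s(x,y,z)=+\infty$ in \eqref{1.3}. In the genuinely nontrivial case \eqref{1.1}, I would normalize by the strictly positive quantity $d(x,y)^t$: setting $a:=d(x,z)/d(x,y)$ and $b:=d(z,y)/d(x,y)$, both in $(0,1)$, we get
\[
\frac{\varphi_{x,y,z}(t)}{d(x,y)^t}=a^t+b^t-1.
\]
The key observation is that for $a,b\in(0,1)$ the map $t\mapsto a^t+b^t$ is \emph{strictly} decreasing on $(0,\infty)$ with limits $2$ at $0^+$ and $0$ at $\infty$, so the right-hand side has a unique zero. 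By Lemma~\ref{l:1.1}, that zero is precisely $s(x,y,z)$. Hence $\varphi_{x,y,z}(t)\ge 0$ if and only if $t\le s(x,y,z)$.

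To conclude, $d^t$ is a metric if and only if the triangle inequality holds at every triple, so
\[
\{t>0:d^t\text{ is a metric}\}=\bigcap_{x,y,z\in X}\bigl(0,s(x,y,z)\bigr]=\bigl(0,\inf\{s(x,y,z):x,y,z\in X\}\bigr],
\]
with the usual convention that the right-hand side is $(0,+\infty)$ when the infimum is $+\infty$. Taking the supremum yields $t_0(d)=\inf\{s(x,y,z):x,y,z\in X\}$. I expect the only delicate step to be the strict monotonicity of $a^t+b^t$ on $(0,1)^2$, where the normalization by $d(x,y)^t$ is essential: the raw function $\varphi_{x,y,z}$ need not be monotone, but after normalization each summand is individually strictly decreasing, so the argument becomes routine and the rest is bookkeeping.
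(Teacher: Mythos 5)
Your proof is correct: under \eqref{1.1} both normalized distances $a=d(x,z)/d(x,y)$ and $b=d(z,y)/d(x,y)$ indeed lie in $]0,1[$ (if, say, $d(x,z)=0$ then $x=z$ and $d(z,y)=d(x,y)$, so \eqref{1.1} fails), the map $t\longmapsto a^t+b^t$ is strictly decreasing from $2$ to $0$, so the triangle inequality for $d^t$ at the triple $(x,y,z)$ holds exactly when $t\leq s(x,y,z)$, and since $s(x,y,z)\geq 1$ the intersection over all triples is the nonempty interval $\left]0,\inf\{s(x,y,z)\}\right]$, whose supremum is the claimed infimum. Note that the paper gives no proof of Proposition~\ref{p:1.2'} but quotes it from \cite{DoMa2}; your normalization-and-monotonicity argument is the standard proof of that characterization, so your route is the expected one rather than a genuinely different alternative.
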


\begin{remark}
\label{r:1.3}  A point $%
z $ in a metric space $(X,d)$ \textit{lies between} two distinct
points $x$ and $y$ if $d(x,z)+d(z,y)=d(x,y)$ and $x\ne z\ne y$,
see \cite[p.~55]{Pa}. Now $t_0=t_0(d)$ can be called the
\textit{betweenness exponent} of the space $(X,d)$.
\end{remark}
Recall that a metric space $(X,d)$ is \textit{ultrametric} if the
metric $d$ satisfies the \textit{ultra-triangle inequality}
$d(x,y)\leq \max\{d(x,z), d(z,y)\}$ for all $x,y,z\in X$. In this
case $d$ is called an \textit{ultrametric}. Since \eqref{1.1}
never holds in an ultrametric space $(X,d)$ we have
$t_0(d)=\infty$ in this case. In fact, $(X,d)$ is ultrametric if
and only if $t_0(d)=\infty$.

\begin{lemma}
\label{l:1.4} Let $B(a,r)$ be a closed ball in a metric space
$(X,d)$. Then we have the inequality
\begin{equation}  \label{1.4}
\diam(B(a,r))\leq2^{\frac1{t_0}}r
\end{equation}
where $t_0=t_0(d)$ is the betweenness exponent of $(X,d)$.
\end{lemma}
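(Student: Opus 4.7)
The plan is to invoke the definition of $t_0$ to transfer the problem onto a \emph{bona fide} metric $d^{t_0}$, on which the standard triangle inequality gives the diameter bound essentially for free. Two small preliminary points make this clean: first, since $d$ itself is a metric, the set
\[
T:=\{t>0:d^t\text{ is a metric on }X\}
\]
contains $1$, so $t_0\ge 1$; second, $T$ is closed (if $t_n\in T$ and $t_n\to t<\infty$, then passing to the limit in $(d(x,y))^{t_n}\le(d(x,z))^{t_n}+(d(z,y))^{t_n}$ yields the same inequality for $t$), so when $t_0<\infty$ the exponent $t_0$ is itself in $T$, that is, $d^{t_0}$ is a metric.

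Now split into two cases. If $t_0=\infty$, then $(X,d)$ is ultrametric (as noted in the paragraph preceding the lemma), and for any $x,y\in B(a,r)$ the ultra\-triangle inequality gives
\[
d(x,y)\le\max\{d(x,a),d(a,y)\}\le r,
\]
which is the required bound since $2^{1/t_0}=2^0=1$ (interpreted as the limit). If $t_0<\infty$, then applying the ordinary triangle inequality for the metric $d^{t_0}$ to $x,y\in B(a,r)$ yields
\[
(d(x,y))^{t_0}\le(d(x,a))^{t_0}+(d(a,y))^{t_0}\le 2r^{t_0},
\]
so $d(x,y)\le 2^{1/t_0}r$. Taking the supremum over $x,y\in B(a,r)$ gives \eqref{1.4}.

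The only non-routine step is the verification that $t_0\in T$ when $t_0<\infty$, i.e. that the supremum defining $t_0$ is actually attained; this is the one place where a little care is needed, and it is handled by the continuity/closedness argument above. Everything else is a one-line application of the triangle inequality for the rescaled metric $d^{t_0}$.
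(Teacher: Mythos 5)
Your proof is correct and follows essentially the same route as the paper: the case $t_0=\infty$ is handled via the ultrametric property, and the case $t_0<\infty$ via the triangle inequality for the metric $d^{t_0}$, yielding $(d(x,y))^{t_0}\le 2r^{t_0}$. Your additional closedness argument showing that the supremum defining $t_0$ is attained (so that $d^{t_0}$ really is a metric) is valid and fills in a point the paper asserts without comment.
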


\begin{proof}
If $t_0(d)=\infty$, then $(X,d)$ is ultrametric and $\diam
B(a,r)\leq r$ for every ball $B(a,r)$, see, for example,
\cite[p.~43]{Ed}. In the case $t_0(d)<\infty$ the function
$(x,y)\longmapsto (d(x,y))^{t_0}$ is a metric. Hence, by the
triangle inequality, we have
$$
d^{t_0}(x,y)\leq d^{t_0}(x,a)+ d^{t_0}(y,a)\leq 2r^{t_0}
$$
for all $x,y\in B(a,r)$. The last inequality implies \eqref{1.4}.
\end{proof}
There is the possibility of more refined classification of
nonprecompact metric spaces by means of an extension of the range
of values of the functions $\mathcal{N}_\varepsilon$ and $\mathcal
M_\varepsilon$ to transfinite cardinal numbers.

Let $W$ and $A$ be subsets of $X$. Define
\begin{equation}\label{eq2.5}
\hat{\mathcal N}_\varepsilon^A(W):=\min\{\card(C): C\text{ is an
$\varepsilon$-net for $W$ and }C\subseteq A\}.
\end{equation}
Moreover for the sake of simplicity, write $\hat{\mathcal
N}_\varepsilon(W):=\hat{\mathcal N}_\varepsilon^W(W)$.

For convenience we introduce an additional definition.
\begin{definition}
\label{d:3.1} A set $A$ is {\it maximal
$\varepsilon$-distinguish\-able} with respect to $W$ if $A$ is
$\varepsilon$-distin\-guish\-able, $A\subseteq W$ and for every
$\varepsilon$-distinguishable $B\subseteq W$ the inclusion
$A\subseteq B$ implies the equality $A=B$.
\end{definition}

Write $\hat{\mathcal{M}}_\varepsilon(W)$ for the smallest power of
maximal $\varepsilon$-distinguishable sets $A\subseteq W$ and
 define the quantity $%
{\mathcal{M}}^*_\varepsilon(W)$ as the smallest cardinal number
which is
greater than or equal to $\card(A)$ for every $\varepsilon$-distinguishable $%
A\subseteq W$. It is clear that
$$
\mathcal M_\varepsilon^*(W)=\mathcal
M_\varepsilon(W)\quad\text{and}\quad \hat{\mathcal
N}_\varepsilon(W)=\mathcal N_\varepsilon(W)
$$
for every precompact $W$.
\begin{lemma}
\label{l:3.2} Let $W$ be a set in a metric space $(X,d)$. Then for every $%
\varepsilon>0$ we have the following inequalities
\begin{equation}  \label{3.1}
\mathcal{M}^*_{2^{\frac1{t_0}}\varepsilon}(W)\leq \hat{\mathcal{N}}%
_\varepsilon^X(W)\leq\hat{\mathcal{N}}_\varepsilon(W)\leq\hat{\mathcal{M}}%
_\varepsilon(W)\leq{\mathcal{M}}^*_\varepsilon(W)
\end{equation}
where $t_0$ is the betweenness exponent of $X$.
\end{lemma}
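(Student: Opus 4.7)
The statement is a chain of four inequalities, and my plan is to prove each separately: the three rightmost are essentially formal, while the leftmost uses Lemma~\ref{l:1.4} in an essential way.

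For $\hat{\mathcal M}_\varepsilon(W)\leq\mathcal M^*_\varepsilon(W)$: every maximal $\varepsilon$-distinguishable $A\subseteq W$ is in particular $\varepsilon$-distinguishable, so its cardinality is dominated by $\mathcal M^*_\varepsilon(W)$; existence of at least one such $A$ follows from Zorn's lemma applied to the family of $\varepsilon$-distinguishable subsets of $W$ ordered by inclusion. For $\hat{\mathcal N}_\varepsilon(W)\leq\hat{\mathcal M}_\varepsilon(W)$: any maximal $\varepsilon$-distinguishable $A\subseteq W$ is automatically an $\varepsilon$-net for $W$, since if some $w\in W$ had $d(w,a)>\varepsilon$ for all $a\in A$ then $A\cup\{w\}$ would still be $\varepsilon$-distinguishable, contradicting maximality; as $A\subseteq W$, the set $A$ is an admissible candidate in \eqref{eq2.5} with outer constraint set $W$. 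Finally, $\hat{\mathcal N}^X_\varepsilon(W)\leq\hat{\mathcal N}_\varepsilon(W)$ is immediate because the second infimum is taken over the subclass $C\subseteq W$ of the candidates $C\subseteq X$ defining the first.

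The substantial step is the leftmost inequality. I would fix an $\varepsilon$-net $C\subseteq X$ for $W$ with $\card(C)=\hat{\mathcal N}^X_\varepsilon(W)$ and an arbitrary $2^{1/t_0}\varepsilon$-distinguishable subset $A\subseteq W$, then for each $a\in A$ choose some $\varphi(a)\in C$ with $a\in B(\varphi(a),\varepsilon)$. By Lemma~\ref{l:1.4} every ball $B(c,\varepsilon)$ has diameter at most $2^{1/t_0}\varepsilon$, so two distinct points of $A$ cannot share a value of $\varphi$: otherwise they would lie in a common ball of diameter $\leq 2^{1/t_0}\varepsilon$, contradicting the $2^{1/t_0}\varepsilon$-distinguishability of $A$. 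Hence $\varphi$ is injective and $\card(A)\leq\card(C)$. Since this bound is uniform in $A$, the smallest cardinal dominating $\card(A)$ for all such $A$---which is by definition $\mathcal M^*_{2^{1/t_0}\varepsilon}(W)$---is itself at most $\card(C)=\hat{\mathcal N}^X_\varepsilon(W)$.

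The only subtlety I anticipate is a small cardinal-arithmetic bookkeeping at the very end: one must check from the definition that if a single cardinal dominates $\card(A)$ for every admissible $A$, then $\mathcal M^*$ is itself bounded by that cardinal. This is immediate from the ``smallest cardinal $\geq\dots$'' phrasing, but it is the only point where the argument needs a sentence beyond the familiar precompact case.
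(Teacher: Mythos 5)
Your proof is correct and follows essentially the same route as the paper: the three right-hand inequalities are handled by the same definitional observations (a maximal $\varepsilon$-distinguishable subset of $W$ is an $\varepsilon$-net), and the leftmost inequality rests on Lemma~\ref{l:1.4} together with a pigeonhole argument comparing a $2^{1/t_0}\varepsilon$-distinguishable set with a minimal $\varepsilon$-net, which you phrase as a direct injection rather than the paper's contradiction. Your extra remarks (Zorn's lemma for the existence of maximal distinguishable sets, and the final cardinal bookkeeping for $\mathcal M^*$) are sound refinements of points the paper leaves implicit, not a different method.
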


\begin{proof}
The first inequality from the right is immediate. For the proof of
the second one note that every maximal
$\varepsilon$-distinguishable set $A\subseteq W$ is an
$\varepsilon$-net for $W$. The inequality $ \hat{\mathcal
N}_\varepsilon^X(W) \leq \hat{\mathcal N}_\varepsilon(W) $ is
clear from the definitions. To prove the first inequality from the
left it suffices to show $ \card(A)\leq\hat{\mathcal
N}_\varepsilon^X(W) $ for every
$2^{\frac1{t_0}}\varepsilon$-distinguishable set $A \subseteq W$.
Let $\{x_i:i\in I\}$ be an $\varepsilon$-net for $W$ with
$\card(I)=\hat{\mathcal N}_\varepsilon^X(W)$. Suppose that there
exists a $2^{\frac1{t_0}}\varepsilon$-distinguishable set
$A_0\subseteq W$ for which
$$
\card(A_0)>\card(I).
$$
This inequality and the inclusion
$$
A_0\subseteq\bigcup_{i\in I}B(x_i,\varepsilon)
$$
imply that there exists a ball $B(x_i,\varepsilon)$ which contains
at least two distinct points $y_i,z_i\in A_0$. (In the opposite
case $A_0$ and some subset of $I$ have the same cardinality.)
Lemma \ref{l:1.4} implies that
$$d(y_i,z_i)\leq2^{\frac{1}{t_{0}}}\varepsilon.$$
This contradicts the assumption that $A_0$ is
$2^{\frac{1}{t_{0}}}\varepsilon\text{-distinguishable}$.

\end{proof}

\begin{corollary}
Let $X$ be a nonprecompact  metric space. Then for some
$\varepsilon_0>0$ there is an $\varepsilon_0$-distinguishable,
countable infinite set $A\subseteq X$.
\end{corollary}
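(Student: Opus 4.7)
The plan is to extract a suitable $\varepsilon_0$ from the failure of precompactness and then greedily build a countable $\varepsilon_0$-distinguishable set by induction.

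First, since $X$ is not totally bounded, negating the defining condition yields some $\varepsilon_0>0$ such that no finite subset of $X$ is an $\varepsilon_0$-net for $X$; equivalently, in the notation of Lemma \ref{l:3.2}, $\hat{\mathcal N}^X_{\varepsilon_0}(X)\geq\aleph_0$.

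Next I would construct $A=\{x_n:n\in\mathbb{N}\}\subseteq X$ recursively. Choose $x_1\in X$ arbitrarily. If $x_1,\dots,x_n$ have been chosen with pairwise distances greater than $\varepsilon_0$, then the finite set $\{x_1,\dots,x_n\}$ fails to be an $\varepsilon_0$-net for $X$, so there exists a point $x_{n+1}\in X\setminus\bigcup_{i=1}^n B(x_i,\varepsilon_0)$; the definition of the closed balls then gives $d(x_{n+1},x_i)>\varepsilon_0$ for every $i\leq n$. The resulting countably infinite set $A$ is $\varepsilon_0$-distinguishable by construction.

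The main (and only) subtlety is to guarantee countability rather than mere infinitude of $A$. An alternative approach based on Lemma \ref{l:3.2}---combining the inequality $\hat{\mathcal N}_{\varepsilon_0}(X)\leq\hat{\mathcal M}_{\varepsilon_0}(X)$ with Zorn's lemma to produce a maximal $\varepsilon_0$-distinguishable subset of $X$---would in principle yield a witness that could be uncountable, and one would then need to extract a countable subset. The explicit recursive construction above sidesteps this issue and uses only countable dependent choice.
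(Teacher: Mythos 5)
Your construction is correct: nonprecompactness of $X$ (with the paper's convention that $\varepsilon$-nets have centers in $X$) gives an $\varepsilon_0>0$ admitting no finite $\varepsilon_0$-net, and the greedy recursion, choosing $x_{n+1}\notin\bigcup_{i=1}^{n}B(x_i,\varepsilon_0)$ so that $d(x_{n+1},x_i)>\varepsilon_0$ for $i\le n$, produces a countably infinite $\varepsilon_0$-distinguishable set using only dependent choice. The paper itself states this corollary without proof, right after Lemma \ref{l:3.2}, so the intended argument is evidently the one you mention only as an alternative: nonprecompactness gives $\hat{\mathcal N}_{\varepsilon_0}(X)\ge\aleph_0$, the inequality $\hat{\mathcal N}_{\varepsilon_0}(X)\le\hat{\mathcal M}_{\varepsilon_0}(X)$ of Lemma \ref{l:3.2} (every maximal $\varepsilon_0$-distinguishable set is an $\varepsilon_0$-net) forces every maximal $\varepsilon_0$-distinguishable set to be infinite, and one then passes to a countably infinite subset, which is automatically $\varepsilon_0$-distinguishable; the cardinality issue you flag is thus harmless rather than a real obstacle, though that route does lean on Zorn's lemma to produce maximal distinguishable sets and on the (choice-dependent) fact that every infinite set contains a countably infinite subset. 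Your direct recursion is the more elementary and self-contained argument, while the lemma-based derivation has the virtue of exhibiting the corollary as an immediate consequence of the transfinite covering/packing machinery the section develops; both are valid.
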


\begin{example}
\label{e:3.3} Let $X$ be a set of a power $\alpha>2$ and let $a$
be an element of $X$. For every two distinct $x,y\in X$ write
\begin{equation*}
d(x,y)=
\begin{cases}
2^{\frac1{t}} & \text{if } x\ne a\ne y \\
1 & \text{otherwise}%
\end{cases}%
\end{equation*}
where $t\in[1,\infty[$ and put $d(x,y)=0$ if $x=y$. Proposition
\ref{p:1.2'} implies that the metric space $(X,d)$ has the
betweenness exponent $t_0(d)=t$. If we define a set $W$ as
$W=X\setminus\{a\}$, then
\begin{equation*}
\hat{\mathcal{N}}_\varepsilon(W)=\hat{\mathcal{M}}_\varepsilon(W)=
\mathcal{M}^*_\varepsilon(W)= \card(W)
\end{equation*}
but
\begin{equation*}
\hat{\mathcal{N}}^{X}_\varepsilon(W)=\mathcal{M}^*_{2^{\frac1{t}}
\varepsilon}(W)=1=\hat{\mathcal{M}}_\varepsilon(X)
\end{equation*}
for every $\varepsilon\in\;]1,2^{\frac1{t}}[$.
\end{example}

\begin{theorem}
\label{t:1.6} Let $(X,d)$ be a metric space. The following
statements are equivalent.

\begin{itemize}
\item[$(i)$] The space $X$ is ultrametric.

\item[$(ii)$] For every $W\subseteq X$  the equalities
\begin{equation}\label{1.7}
\mathcal{M}_{\varepsilon }^{\ast }(W)=\hat{\mathcal{N}}_{\varepsilon }^{X}(W)=\hat{\mathcal{N}}%
_{\varepsilon }(W)=\hat{\mathcal{M}}_{\varepsilon }(W)
\end{equation}%
hold for all $\varepsilon >0.$

\item[$(iii)$] For every compact $W\subseteq X$ and every
$\varepsilon>0$ we have the equality
\begin{equation*}
\mathcal{N}_\varepsilon(W)=\mathcal{M}_\varepsilon(W).
\end{equation*}
\end{itemize}
\end{theorem}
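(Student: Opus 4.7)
The plan is to establish the cycle $(i) \Rightarrow (ii) \Rightarrow (iii) \Rightarrow (i)$. The first two implications should follow almost immediately from Lemma~\ref{l:3.2} and the definitions; the real content lies in $(iii) \Rightarrow (i)$, where I expect to produce a small finite counterexample from any triple that violates the ultrametric inequality.

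For $(i) \Rightarrow (ii)$, I would use the equivalence (recorded in the text just before Lemma~\ref{l:1.4}) that $X$ is ultrametric if and only if $t_{0}(d) = \infty$; then the factor $2^{1/t_{0}}$ in Lemma~\ref{l:3.2} equals $1$, so inequality \eqref{3.1} reads
$$\mathcal{M}^{*}_{\varepsilon}(W) \leq \hat{\mathcal{N}}^{X}_{\varepsilon}(W) \leq \hat{\mathcal{N}}_{\varepsilon}(W) \leq \hat{\mathcal{M}}_{\varepsilon}(W) \leq \mathcal{M}^{*}_{\varepsilon}(W),$$
forcing equality throughout. This is exactly \eqref{1.7}.

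For $(ii) \Rightarrow (iii)$, I would simply note that a compact set is precompact, so the identifications $\mathcal{M}^{*}_{\varepsilon}(W) = \mathcal{M}_{\varepsilon}(W)$ and $\hat{\mathcal{N}}_{\varepsilon}(W) = \mathcal{N}_{\varepsilon}(W)$ stated in the paper reduce \eqref{1.7} to $\mathcal{N}_{\varepsilon}(W) = \mathcal{M}_{\varepsilon}(W)$.

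The main step is $(iii) \Rightarrow (i)$, which I would prove by contraposition. If $X$ is not ultrametric, there exist points $x,y,z \in X$ satisfying \eqref{1.1}, and the strict inequality there lets us pick $\varepsilon$ with $\max\{d(x,z), d(z,y)\} \leq \varepsilon < d(x,y)$. The finite, hence compact, set $W := \{x,y,z\}$ then exhibits a mismatch: the singleton $\{z\} \subseteq W$ is an $\varepsilon$-net for $W$, giving $\mathcal{N}_{\varepsilon}(W) = 1$, whereas $\{x,y\} \subseteq W$ is $\varepsilon$-distinguishable, giving $\mathcal{M}_{\varepsilon}(W) \geq 2$, contradicting $(iii)$. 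The only conceptual point I would need to get right is that the counterexample can be taken finite, which keeps us safely within the scope of hypothesis $(iii)$.
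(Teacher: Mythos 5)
Your proposal is correct and follows essentially the same route as the paper: (i)$\Rightarrow$(ii) via $t_0(d)=\infty$ collapsing inequality \eqref{3.1}, (ii)$\Rightarrow$(iii) by the precompact identifications, and (iii)$\Rightarrow$(i) by contraposition using the three-point compact set $\{x,y,z\}$ with $\varepsilon=\max\{d(x,z),d(z,y)\}$, where the center $z$ gives $\mathcal{N}_\varepsilon=1$ while $\{x,y\}$ gives $\mathcal{M}_\varepsilon\geq2$. No gaps; this matches the paper's argument in all essentials.
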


\begin{proof}
Since $t_0(d)=\infty$ holds if $d$ is an ultrametric, inequalities
\eqref{3.1} impliy \eqref{1.7} for ultrametric spaces. The
implication (ii)$\Rightarrow $(iii) is trivial. If $(X,d)$ is not
an ultrametric space, then there are points $a,b,c\in X$ such that
\begin{equation}\label{1.8}
d(a,b)>\max\{d(a,c), d(b,c)\}.
\end{equation}
Write $\varepsilon:=\max\{d(a,c), d(b,c)\}$. It follows from
\eqref{1.8} that $\mathcal M_\varepsilon(\{a,b,c\})\geq2$.
Moreover, since $B(c,\varepsilon)\supseteq\{a,b,c\}$, we see that
$\mathcal N_\varepsilon(\{a,b,c\})\leq 1$. Hence $\mathcal
N_\varepsilon(\{a,b,c\})\ne\mathcal M_\varepsilon(\{a,b,c\})$.
\end{proof}
Consider now equalities \eqref{1.7} for non ultrametric spaces.

Recall that a cardinal number $\alpha$ is the \emph{density} of a
metric space $X$ if
\begin{equation*}
\alpha =\underset{A}{\text{min}}(\text{card}(A))
\end{equation*}
where the minimum is taken over the family of all  dense sets $A
\subseteq X$. For the density of $X$ we use the symbol den$X$. For
convenience we repeat some definitions related to the confinality
of the cardinals, see, for example \cite{Ro}. We understand the
ordinal numbers as some special well-ordered sets
$\alpha,\beta,... $ for which the statements: \\
-$\alpha$ is similar to an  initial segment of
$\beta$ and $\alpha\neq\beta$,   $\alpha\prec\beta$; \\
-$\alpha$ is proper subset of $\beta$, $\alpha\subsetneq\beta$;\\
-$\alpha$ belongs to $\beta$, $\alpha\in\beta$ \\
are equivalent. An ordinal number $\beta$ is an \emph{initial}
ordinal if for all ordinals $\alpha$ we have the implication
\begin{equation*}
(\alpha\prec\beta)\Rightarrow(|\alpha|\lneq|\beta|)
\end{equation*}
where $|\alpha|$ and $|\beta|$ are corresponding cardinality of
$\alpha$ and $\beta$. By cardinal numbers we mean initial
ordinals. An ordinal number $\alpha$ is \emph{confinal} in an
ordinal $\beta$ if there is an one-to-one increasing mapping
$f:\alpha\rightarrow\beta$ such that for every ordinal
$\gamma\in\beta$ there exists an ordinal $\delta\in\alpha$ with
\begin{equation*}
\gamma\prec f(\delta)\qquad \text{or}\qquad  \gamma=f(\delta).
\end{equation*}
The \emph{confinality} of an ordinal $\beta$ is the least ordinal
$\alpha$ with $\alpha$ confinal in $\beta$. We write cf$(\beta)$
for the confinality of $\beta$. If $\alpha$ is the confinality for
some $\beta$, then $\alpha$ is a cardinal,\cite[p.91]{Ro}.
\begin{theorem}\label{t:1.12}
Let $W$ be a subset of a metric space $X$. Suppose that $den(W)$
is a cardinal of an uncountable confinality. Then there is
$\varepsilon _{0}>0$ such that the equalities
\begin{equation}
\hat{\mathcal{N}}_{\varepsilon }^{X}(W)=\hat{\mathcal{N}}_{\varepsilon }(W)=\hat{\mathcal{M}}%
_{\varepsilon }(W)=\hat{\mathcal{M}}_{\varepsilon }^{\ast
}(W)=\text{den}(W) \label{3.2}
\end{equation}%
hold for all $\varepsilon \in \left] 0,\varepsilon _{0}\right[$.
\end{theorem}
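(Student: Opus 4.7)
Denote $\alpha:=\den(W)$. The plan is to sandwich every one of the four cardinals in \eqref{3.2} between two copies of $\alpha$, using the chain of Lemma \ref{l:3.2} as the backbone. Two one-sided estimates suffice: a universal upper bound $\mathcal M^*_\varepsilon(W)\leq\alpha$ valid for every $\varepsilon>0$, and a lower bound $\mathcal M^*_{\delta_0}(W)\geq\alpha$ for some $\delta_0>0$. Only the second estimate will invoke the cofinality hypothesis.

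For the upper bound, fix a dense set $D\subseteq W$ of cardinality $\alpha$ and, given any $\varepsilon$-distinguishable $A\subseteq W$, attach to each $x\in A$ some $\phi(x)\in D$ with $d(x,\phi(x))<\varepsilon/2$. The triangle inequality forces $\phi$ to be injective, whence $\card(A)\leq\alpha$; the definition of $\mathcal M^*_\varepsilon$ then gives $\mathcal M^*_\varepsilon(W)\leq\alpha$ for every $\varepsilon>0$.

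For the lower bound I argue by contradiction. Suppose $\mathcal M^*_\delta(W)<\alpha$ for every $\delta>0$. For each $n\in\mathbb N$, Zorn's lemma furnishes a maximal $(1/n)$-distinguishable set $A_n\subseteq W$, and maximality forces $A_n$ to be a $(1/n)$-net for $W$ (otherwise any $w\in W$ with $d(w,A_n)>1/n$ could be adjoined). Hence $D':=\bigcup_{n<\omega}A_n$ is dense in $W$. Each $\card(A_n)<\alpha$ by assumption, and since $\cf(\alpha)>\omega$ the countable family $\{\card(A_n)\}_{n<\omega}$ cannot be cofinal in $\alpha$; setting $\beta:=\sup_n\card(A_n)<\alpha$, one obtains $\card(D')\leq\aleph_0\cdot\beta=\max(\aleph_0,\beta)<\alpha$ (using $\alpha\geq\aleph_1$, which is encoded in the cofinality hypothesis). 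This contradicts $\den(W)=\alpha$, so some $\delta_0>0$ satisfies $\mathcal M^*_{\delta_0}(W)\geq\alpha$.

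Finally, set $\varepsilon_0:=2^{-1/t_0}\delta_0$ (read as $\varepsilon_0:=\delta_0$ when $t_0=\infty$). For $\varepsilon\in(0,\varepsilon_0)$ the number $2^{1/t_0}\varepsilon$ is below $\delta_0$, so the monotonicity of $\mathcal M^*$ in its subscript combined with Lemma \ref{l:3.2} yields
\[
\alpha\leq\mathcal M^*_{2^{1/t_0}\varepsilon}(W)\leq\hat{\mathcal N}^X_\varepsilon(W)\leq\hat{\mathcal N}_\varepsilon(W)\leq\hat{\mathcal M}_\varepsilon(W)\leq\mathcal M^*_\varepsilon(W)\leq\alpha,
\]
collapsing the whole chain to the equalities \eqref{3.2}. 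The main obstacle is the cardinal-arithmetic step above: one must convert ``each $A_n$ has cardinality $<\alpha$'' into ``$\bigcup_n A_n$ has cardinality $<\alpha$'', and this conversion uses precisely $\cf(\alpha)>\omega$; the argument genuinely fails for $\alpha=\aleph_0$ or $\alpha=\aleph_\omega$, which is why the cofinality assumption cannot be dropped.
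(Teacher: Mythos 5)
Your proof is correct, and it follows the same overall sandwich strategy as the paper (upper bound $\leq\den(W)$ for every $\varepsilon$, a lower bound for one small scale, then collapse the chain of Lemma \ref{l:3.2}), but the crucial lower bound is implemented differently. The paper reduces everything to the covering-number estimate \eqref{3.4}: it fixes a dense set $D\subseteq W$ of minimal cardinality, proves the transfer inequality \eqref{3.6} between $\hat{\mathcal N}_\varepsilon(D)$ and the covering numbers of $W$, takes optimal $\varepsilon_i$-nets $D_i\subseteq D$, and runs the cofinality argument on $\bigcup_i D_i$ (with explicit ordinal bookkeeping) to force $\card(D_{i_0})=\den(W)$ for some $i_0$, finally transporting this back to $W$ via \eqref{3.6}. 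You instead work with packings directly in $W$: maximal $(1/n)$-distinguishable sets $A_n$ (Zorn), the observation that such sets are $(1/n)$-nets (a fact already used in the proof of Lemma \ref{l:3.2}), and the same ``countable union of sets of size $<\alpha$ has size $<\alpha$ when the cofinality is uncountable'' step, yielding $\mathcal M^*_{\delta_0}(W)\geq\den(W)$; your upper bound $\mathcal M^*_\varepsilon(W)\leq\den(W)$ also comes from a direct injection of any $\varepsilon$-distinguishable set into a dense set rather than from monotonicity through \eqref{3.1}. What you gain is economy: no auxiliary inequality \eqref{3.6}, no fixed minimal dense set in the lower bound, no ordinal/segment bookkeeping, and since your two estimates bound the two extreme terms $\mathcal M^*_{2^{1/t_0}\varepsilon}$ and $\mathcal M^*_\varepsilon$ of \eqref{3.1}, the chain collapses in one stroke; what the paper's route buys is the independently useful comparison \eqref{3.6} between a set and its dense subsets, and a formulation of the key lower bound purely in terms of covering numbers. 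Both arguments use the cofinality hypothesis at exactly the same point, and, as you note, it is indispensable there.
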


Write, as usual, $\aleph_0$ for  $\text{card}(\mathbb{N})$ and
$\mathfrak{c}=2^{\aleph_0}$=card($\mathbb{R}$).

\begin{corollary}
Let $W$ be a subset of a metric space $X$. If \
$den(W)=\mathfrak{c}$, then there is $\varepsilon _{0}>0$ such
that the equalities
\begin{equation}
\hat{\mathcal{N}}_{\varepsilon
}^{X}(W)=\hat{\mathcal{N}}_{\varepsilon}(W)=\hat{{\mathcal{M}}}
_{\varepsilon}(W)=\hat{{\mathcal{M}}}_{\varepsilon}^{\ast}(W)=\mathfrak{c}
\label{3.3}
\end{equation}%
hold for all $\varepsilon \in \left] 0,\varepsilon _{0}\right[$.
\end{corollary}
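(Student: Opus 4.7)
The plan is to reduce the corollary directly to Theorem \ref{t:1.12}: since that theorem already delivers the four equalities in (\ref{3.2}) for any subset $W$ whose density is a cardinal of uncountable cofinality, the only thing to verify is that $\mathfrak{c}$ itself has uncountable cofinality. Once this is established, the conclusion is immediate with $\text{den}(W) = \mathfrak{c}$ replaced throughout the displayed chain (\ref{3.2}).

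The single nontrivial step is therefore the cardinal-arithmetic fact $\operatorname{cf}(\mathfrak{c}) > \aleph_0$. I would prove this by König's theorem, which asserts that $\operatorname{cf}(2^{\kappa}) > \kappa$ for every infinite cardinal $\kappa$; specialized to $\kappa = \aleph_0$ this gives $\operatorname{cf}(\mathfrak{c}) = \operatorname{cf}(2^{\aleph_0}) > \aleph_0$, i.e., the cofinality of $\mathfrak{c}$ is uncountable. If one prefers a self-contained argument, one can give the standard diagonal proof: assuming $\mathfrak{c} = \sup_{n \in \mathbb{N}} \kappa_n$ with $\kappa_n < \mathfrak{c}$ for each $n$, partition $\mathbb{R}$ into countably many sets $R_n$ with $\operatorname{card}(R_n) \leq \kappa_n$ and construct a real not belonging to any $R_n$ by a Cantor-style diagonalization, contradicting the assumption.

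With $\operatorname{cf}(\mathfrak{c}) > \aleph_0$ in hand, I would simply invoke Theorem \ref{t:1.12} applied to $W$ with $\text{den}(W) = \mathfrak{c}$: the theorem produces $\varepsilon_0 > 0$ such that
\begin{equation*}
\hat{\mathcal{N}}_\varepsilon^X(W) = \hat{\mathcal{N}}_\varepsilon(W) = \hat{\mathcal{M}}_\varepsilon(W) = \hat{\mathcal{M}}_\varepsilon^{\ast}(W) = \text{den}(W) = \mathfrak{c}
\end{equation*}
for all $\varepsilon \in \,]0, \varepsilon_0[$, which is exactly (\ref{3.3}).

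There is, in effect, no genuine obstacle: the corollary is a specialization of Theorem \ref{t:1.12}, and the only input is König's theorem, which is standard. The only thing worth flagging is that one should not be tempted to try to prove the corollary from scratch by manipulating $\varepsilon$-nets in $\mathbb{R}$, since the statement is abstract and makes no assumption on $X$ beyond the density hypothesis on $W$.
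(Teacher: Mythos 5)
Your proposal is correct and matches the paper's own argument: the paper likewise deduces that $\mathfrak{c}$ has uncountable confinality from the fact $\gamma\prec \mathrm{cf}(2^{\gamma})$ for every infinite cardinal $\gamma$ (K\"onig's theorem, cited from Roitman) and then applies Theorem \ref{t:1.12}. No differences worth noting.
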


\begin{proof}
Since for each infinite cardinal $\gamma$ we have $ \gamma\prec
cf(2^{\gamma }), $ see \cite[Theorem 44,p.93]{Ro}, $\mathfrak{c}$
has an uncountable confinality.
\end{proof}
\begin{corollary}\label{c:2.12}
Let $(X,\tau)$ be a metrizable topological space, let $W\subseteq
X$ be a set such that {\rm den}$\, (W)$ is a cardinal of an
uncountable confinality and let $D$ be a finite family of metrics
$d$ each of which induces the topology $\tau$ on $X$. Then there
is $\varepsilon_0>0$ such that for all
$\varepsilon\in\;]0,\varepsilon_0[$ the values $\hat{\mathcal
N}_\varepsilon(W)$, $\hat{\mathcal N}^X_\varepsilon(W)$,
$\hat{\mathcal M}_\varepsilon(W)$ and $\mathcal
M^*_\varepsilon(W)$ do not depend on the choice of $d\in D$.
\end{corollary}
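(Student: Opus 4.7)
The plan is to reduce the corollary to Theorem \ref{t:1.12} applied separately to each metric in the finite family $D$. The key conceptual point is that the density $\den(W)$ is a topological invariant: a set is dense in $W$ with respect to the subspace topology, and all metrics in $D$ induce the same subspace topology on $W$. Consequently the cardinal $\den(W)$ appearing on the right-hand side of \eqref{3.2} is the \emph{same} cardinal for every $d\in D$; in particular the hypothesis of uncountable cofinality is a property of this single cardinal and holds uniformly across $D$.

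First I would, for each $d\in D$, invoke Theorem \ref{t:1.12} to obtain a threshold $\varepsilon_0(d)>0$ such that
\begin{equation*}
\hat{\mathcal{N}}^X_\varepsilon(W)=\hat{\mathcal{N}}_\varepsilon(W)=\hat{\mathcal{M}}_\varepsilon(W)=\mathcal{M}^*_\varepsilon(W)=\den(W)
\end{equation*}
for all $\varepsilon\in\,]0,\varepsilon_0(d)[$, where of course the four covering/packing quantities on the left are computed with respect to the metric $d$. Because $D$ is finite, the number
\begin{equation*}
\varepsilon_0 := \min_{d\in D}\varepsilon_0(d)
\end{equation*}
is strictly positive.

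Then for every $\varepsilon\in\,]0,\varepsilon_0[$ and every $d\in D$, all four quantities computed with respect to $d$ coincide with $\den(W)$. Since $\den(W)$ does not depend on the particular choice of $d\in D$, the four quantities themselves are independent of $d\in D$, which is the desired conclusion.

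The argument is essentially a packaging of Theorem \ref{t:1.12}, so no real obstacle arises; the only point that requires a brief remark is the topological invariance of $\den(W)$, which follows from the fact that metrics inducing the same topology $\tau$ have the same dense subsets of $W$. Finiteness of $D$ is used in exactly one place, namely to ensure that the minimum of the finitely many thresholds $\varepsilon_0(d)$ is positive.
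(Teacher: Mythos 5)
Your proof is correct and is essentially the argument the paper intends: the corollary follows by applying Theorem \ref{t:1.12} to each $d\in D$ (noting that $\den(W)$ is determined by the topology $\tau$ alone, so the hypothesis and the common value $\den(W)$ are the same for every $d$) and taking the minimum of the finitely many thresholds $\varepsilon_0(d)$. Nothing further is needed.
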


\begin{proof}[\it Proof of Theorem \ref{t:1.12}.] The definitions of cardinal numbers
$\hat{\mathcal{N}}_{\varepsilon }(W)$ and den$(W)$ imply that the inequality%
\[
\hat{\mathcal{N}}_{\varepsilon }^{X}(W)\leq
\hat{\mathcal{N}}_{\varepsilon}(W)\leq \text{den}(W)
\]%
holds for all $\varepsilon >0.$ Hence, by (\ref{3.1}), we have

\[
\hat{\mathcal{M}}_{\varepsilon }(W)\leq \mathcal{M}_{\varepsilon
}^{\ast }(W)\leq \text{den}(W)
\]%
if $\varepsilon >0.$ Moreover, if there is $\varepsilon _{0}>0$ $\
$such that
\begin{equation}
\text{den}(W)\leq \hat{\mathcal{N}}_{\varepsilon _{0}}(W),
\label{3.4}
\end{equation}%
then the last inequality and (\ref{3.1}) imply%
\[
\hat{\mathcal{N}}_{\varepsilon _{0}}^{X}(W)\geq
\mathcal{M}_{2^{-\frac{1}{t_{0}}}\varepsilon _{0}}^{\ast }(W)\geq
\hat{\mathcal{M}}_{2^{-\frac{1}{t_{0}}}\varepsilon _{0}}(W)\geq
\hat{\mathcal{N}}_{2^{-\frac{1}{t_{0}}}\varepsilon _{0}}(W)\geq
\text{den}(W).
\]
Therefore, it is sufficient to show (\ref{3.4}) with some
$\varepsilon _{0}>0$ .

If $D$ is a dense subset of $W$, then for every $k\in \left]
0,1\right[ $ and all $\varepsilon >0$ we have the double
inequality
\begin{equation}
\hat{\mathcal{N}}_{k\varepsilon }(W)\geq
\hat{\mathcal{N}}_{\varepsilon
}(D)\geq\hat{\mathcal{N}}_\frac{\varepsilon }{k}(W). \label{3.6}
\end{equation}
Indeed, if $C=\left\{ c_{i}:i\in I\right\}$ is a $k \varepsilon $%
-net for $W$ with $ \text{card}(C)=\hat{\mathcal{N}}_{k\varepsilon
}(W), $
then the density of $D$ in $W$ implies that for every $c_{i}\in C$ there is $%
b_{i}\in D$ such that
$
B(b_{i},\varepsilon )\supseteq B(c_{i},k\varepsilon).
$
Hence we have
\[
D\subseteq W\subseteq \bigcup\limits_{i\in I}B(c_{i},k\varepsilon
)\subseteq \bigcup\limits_{i\in I}B(b_{i},\varepsilon),
\]%
i.e., $\{b_{i}:i\in I\}$  is an $\varepsilon$-net for $D$, so the
first inequality in \eqref{3.6} is proved. Similarly, if
$P=\left\{ p_{i}:i\in I\right\}$ is an \emph{$\varepsilon$-net}
for $D$ with $ \text{card}(P)=\hat{\mathcal{N}}_{\varepsilon }(D),
$ then for every $x\in W$ there is $p_{i}\in P$ such that
\begin{equation*}
x\in B(p_{i},\frac{\varepsilon }{k}).
\end{equation*}%
Hence $P$ is an \emph{$\frac{\varepsilon}{k}$-net} for $W$, that
implies the second inequality in \eqref{3.6}.

Let $D$ be a dense subset of $W$ such that
\begin{equation}\label{3.61}
\text{card}(D)=\text{den}(W).
\end{equation}

Consider a sequence of positive numbers $\varepsilon
_{1},\varepsilon _{2},...$ with $\text{lim}_{i\rightarrow \infty
}\varepsilon _{i}=0$. Suppose that  a set $D_i$ is an
\emph{$\varepsilon_{i}$-net} for $D$ with $D_{i}\subseteq D$ and
with
\begin{equation}
\text{card}(D_{i})=\hat{\mathcal{N}}_{\varepsilon _{i}}(D)
\label{3.7}
\end{equation}for every $i\in\mathbb N$.
 The set
\begin{equation}
\tilde{D}:=\bigcup\limits_{i=1}^{\infty}D_{i}  \label{3.8}
\end{equation}%
is a dense subset of $W$ and $\tilde{D}\subseteq D$. Hence, by
(\ref{3.61}),
$
\text{card}(\tilde{D})=\text{den}(W).
$
Suppose also that the inequality%
\begin{equation}
\text{card}(D_{i})\lneq\text{den}(W)  \label{3.9}
\end{equation}%
holds for each $D_{i}$. Let $\gamma $ be an initial ordinal such
that $ \left\vert \gamma \right\vert =\text{card}(\tilde{D}) $ and
let $f:\gamma \rightarrow \tilde{D}$ be a bijection. Inequality
\eqref{3.9} implies that for every ordinal $ \alpha
_{i}:=f^{-1}(D_{i}) $ there is an ordinal $\beta _{i}\in \gamma$
such that  $\alpha _{i}$ is similar to an initial segment of
$\beta_{i}$ and $\alpha_i\neq\beta_i$.

From this and (\ref{3.8}) it follows that $\aleph_0$ is confinal
in the ordinal number $\text{den}(W)$, contrary to the supposition
of the theorem. Thus there is $\varepsilon _{i_{0}}>0$ such that
$
\text{card}(D_{i_{0}})=\text{den}(W).
$
This equality and (\ref{3.6}) imply (\ref{3.4}) with $\varepsilon
_{0}=k\varepsilon _{i_{0}}$.
\end{proof}

\section{Metrics on  products of metric spaces}

Let $(X,d_X)$ and $(Y,d_Y)$ be two metric spaces.

\begin{definition}\label{2:d1.1}
A metric $d$ on the product $X\times Y$ is said to be {\it
distance-increasing} if
\begin{equation}\label{2:1.1}
d((x_1,y_1),(x_2,y_2))\leq d((x_3,y_3),(x_4,y_4))
\end{equation}
whenever
\begin{equation}\label{2:1.2}
d_X(x_1,x_2)\leq d_X(x_3,x_4)\quad\text{and}\quad d_Y(y_1,y_2)\leq
d_Y(y_3,y_4);
\end{equation}
$d$ is {\it partial distance-preserving} if  we have the
equalities
\begin{equation}\label{2:1.3}
d((x_1,y),(x_2,y))=d_X(x_1,x_2)\quad\text{and}\quad
d((x,y_1),(x,y_2))=d_Y(y_1,y_2)
\end{equation}
for all $x,x_1,x_2\in X$ and $y,y_1,y_2\in Y$.
\end{definition}
\begin{remark}\label{2:r1.2}
When
\begin{equation}\label{2:1.4}
d_X(x_1,x_2)=d_X(x_3,x_4),\qquad d_Y(y_1,y_2)=d_Y(y_3,y_4),
\end{equation}
 we obtain from \eqref{2:1.1} and \eqref{2:1.2} that
\begin{equation}\label{2:1.5}
d((x_1,y_1),(x_2,y_2))=d((x_3,y_3),(x_4,y_4)),
\end{equation}
i.e., the distance-function $d:(X\times Y)\times(X\times
Y)\to\mathbb R^+$ depends only on ``partial'' distance-functions
$d_X$ and $d_Y$. Consequently, there is a mapping $F:D_X\times
D_Y\to\mathbb R^+$ with
\begin{equation}\label{eq2.6}
D_X:=\{d_X(x,y):x,y\in X\},\qquad D_Y:=\{d_Y(x,y):x,y\in Y\}
\end{equation}
such that the following diagram
\begin{equation}\label{2:1.6}
\begin{diagram}
\node{(X\times Y)\times(X\times Y)}\arrow[3]{e,t}{d}
                                   \arrow{s,l}{\Id} \node[3]{\mathbb
                                   R^+}\\
\node{(X\times X)\times(Y\times Y)}\arrow[3]{e,t}{d_X\otimes d_Y}
\node[3]{D_X\times D_Y}\arrow{n,r}{F}
\end{diagram}
\end{equation}
is commutative. Here $\Id$ is an identification mapping
$$
\Id((x_1,y_1),(x_2,y_2))=((x_1,x_2),(y_1,y_2))
$$
and $d_X\otimes d_y$ is the direct product of the partial distance
functions $d_X$ and $d_Y$,
$$
d_X\otimes d_Y((x_1,x_2),(y_1,y_2))=(d_X(x_1,x_2),d_Y(y_1,y_2)).
$$
\end{remark}
Diagram \eqref{2:1.6}  shows that we can find the metric
properties of the product $X\times Y$ using the corresponding ones
of the function $F$. This approach to the study of metric products
was originated at the paper of A.~Bernig, T.~Foertsch and
V.~Schroeder \cite{BFS}.
\begin{example}\label{2:e1.3}
For every $p\in[1,\infty]$ let $d_p$ be a metric on $X\times Y$
defined as
\begin{equation}\label{2:1.7}
d_p((x_1,y_1),(x_2,y_2))=((d_X(x_1,x_2))^p+(d_Y(y_1,y_2))^p)^{\frac1p}
\end{equation}
if $1\leq p<\infty$ and
\begin{equation}\label{2:1.8}
d_\infty((x_1,y_1),(x_2,y_2))=\max\{d_X(x_1,x_2),d_Y(y_1,y_2)\}
\end{equation}
if $p=\infty$.  It is clear that the metrics $d_p$ are
distance-increasing and partial distance-preserving for every
$p\in[1,\infty]$.
\end{example}
\begin{proposition}\label{2:p1.4}
Let $(X,d_X)$ and $(Y,d_Y)$ be two metric spaces and let $d$ be a
distance-increasing, partial distance-preserving metric on the
product $X\times Y$. Then the following double inequality holds
for all and $(x_i,y_i)\in X\times Y,\ i=1,2$,
\begin{equation}\label{2:1.9}
d_\infty((x_1,y_1),(x_2,y_2))\leq d((x_1,y_1),(x_2,y_2))\leq
d_1((x_1,y_1),(x_2,y_2))
\end{equation}
where metrics $d_\infty$ and $d_1$ are defined by \eqref{2:1.8}
and \eqref{2:1.7}, respectively.
\end{proposition}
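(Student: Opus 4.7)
The plan is to prove each of the two inequalities separately, each using only one of the hypotheses on $d$ together with the triangle inequality or the trivial bound $0 \leq d_Y(y_1,y_2)$. The key observation is that points of the form $(x_1,y)$, $(x_2,y)$ and $(x,y_1)$, $(x,y_2)$ are under direct control via the partial distance-preserving property \eqref{2:1.3}, so these will serve as intermediate points.

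For the upper bound, I would insert the intermediate point $(x_2,y_1)$ and apply the triangle inequality for $d$:
\begin{equation*}
d((x_1,y_1),(x_2,y_2))\leq d((x_1,y_1),(x_2,y_1))+d((x_2,y_1),(x_2,y_2)).
\end{equation*}
The partial distance-preserving property turns the right-hand side into $d_X(x_1,x_2)+d_Y(y_1,y_2)$, which is exactly $d_1((x_1,y_1),(x_2,y_2))$. Note that this half of the proof does not use the distance-increasing hypothesis at all.

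For the lower bound, by symmetry between $X$ and $Y$ I may assume $d_X(x_1,x_2)\geq d_Y(y_1,y_2)$, so that $d_\infty((x_1,y_1),(x_2,y_2))=d_X(x_1,x_2)$. Compare the two pairs of points $((x_1,y_1),(x_2,y_1))$ and $((x_1,y_1),(x_2,y_2))$: their $X$-components produce the same distance $d_X(x_1,x_2)$, while their $Y$-components produce distances $0$ and $d_Y(y_1,y_2)$ respectively. Since $0 \leq d_Y(y_1,y_2)$ and $d_X(x_1,x_2)\leq d_X(x_1,x_2)$, the distance-increasing condition \eqref{2:1.1}--\eqref{2:1.2} gives
\begin{equation*}
d((x_1,y_1),(x_2,y_1))\leq d((x_1,y_1),(x_2,y_2)),
\end{equation*}
and by partial distance-preservation the left-hand side equals $d_X(x_1,x_2)=d_\infty((x_1,y_1),(x_2,y_2))$.

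There is no serious obstacle here; the only thing to notice is the clean division of labor between the two hypotheses. Partial distance-preservation supplies the endpoint values $d_X(x_1,x_2)$ and $d_Y(y_1,y_2)$ along the ``coordinate segments,'' then the triangle inequality yields the upper estimate and the distance-increasing condition yields the lower one.
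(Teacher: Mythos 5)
Your argument is correct and coincides with the paper's own proof: the lower bound via the distance-increasing property applied to the comparison with $(x_2,y_1)$ and partial distance-preservation, the upper bound via the triangle inequality through an intermediate point with one coordinate changed (the paper uses $(x_1,y_2)$ instead of $(x_2,y_1)$, which is an immaterial difference). Nothing further is needed.
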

\begin{proof}
To prove the first inequality in \eqref{2:1.9} we may assume that
\begin{equation}\label{2:1.10}
d_\infty((x_1,y_1),(x_2,y_2))=d_X(x_1,x_2).
\end{equation}
Since $d_Y(y_1,y_2)\geq0= d_Y(y_1,y_1)$ and $d$ is
distance-increasing,
$$
d((x_1,y_1),(x_2,y_2))\geq d((x_1,y_1),(x_2,y_1)).
$$
This inequality, the first equality in \eqref{2:1.3} and
\eqref{2:1.10} imply that
$$
d((x_1,y_1),(x_2,y_2))\geq
d_X(x_1,x_2)=d_\infty((x_1,y_1),(x_2,y_2)),
$$
i.e., the first inequality in \eqref{2:1.9} holds.

To prove the right hand side of \eqref{2:1.9} consider the
following triangle inequality for the metric $d$
\begin{equation}\label{3:12*}
d((x_1,y_1),(x_2,y_2))\leq
d((x_1,y_1),(x_1,y_2))+d((x_1,y_2),(x_2,y_2)).
\end{equation}
From this and \eqref{2:1.3} we obtain
$$
d((x_1,y_2),(x_2,y_2))\leq
d_X(x_1,x_2)+d_Y(y_1,y_2)=d_1((x_1,y_1),(x_2,y_2)),
$$
as required.
\end{proof}
Recall that there is a {\it natural topology} on the product
space, it is the coarsest topology for which the canonical
projections to the factors are continuous.
\begin{corollary}\label{2:c1.5}
Let $(X,d_X)$ and $(Y,d_Y)$ be two metric spaces. All
distance-increasing, partial distance-preserving metrics on the
product $X\times Y$ induce the natural topology on this product.
\end{corollary}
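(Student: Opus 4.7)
The plan is to deduce the corollary directly from Proposition \ref{2:p1.4} by sandwiching an arbitrary distance-increasing, partial distance-preserving metric $d$ between two Lipschitz-equivalent metrics that are both known to induce the natural topology.

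First I would recall (or briefly note) that the metric $d_\infty$ defined in \eqref{2:1.8} induces the natural topology on $X\times Y$: indeed the $d_\infty$-ball of radius $r$ centered at $(x_0,y_0)$ is exactly the product $B_{d_X}(x_0,r)\times B_{d_Y}(y_0,r)$, and such sets form a basis for the product topology. Likewise $d_1$ induces the natural topology, since from the definitions one has the trivial estimates
\begin{equation*}
d_\infty((x_1,y_1),(x_2,y_2))\leq d_1((x_1,y_1),(x_2,y_2))\leq 2\,d_\infty((x_1,y_1),(x_2,y_2)),
\end{equation*}
so $d_1$ and $d_\infty$ are Lipschitz equivalent.

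Now let $d$ be any distance-increasing, partial distance-preserving metric on $X\times Y$. By Proposition \ref{2:p1.4},
\begin{equation*}
d_\infty((x_1,y_1),(x_2,y_2))\leq d((x_1,y_1),(x_2,y_2))\leq d_1((x_1,y_1),(x_2,y_2))\leq 2\,d_\infty((x_1,y_1),(x_2,y_2)).
\end{equation*}
Thus $d$ and $d_\infty$ are Lipschitz equivalent and so induce the same topology, which is the natural topology.

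There is essentially no obstacle here; the work has already been done in Proposition \ref{2:p1.4}. The only point to mention carefully is that the product topology really is the topology of $d_\infty$, which is a routine check with basic open sets.
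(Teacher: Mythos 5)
Your proof is correct and follows essentially the same route as the paper: both use Proposition \ref{2:p1.4} together with the elementary bound $d_1\leq 2d_\infty$ to get $d_\infty\leq d\leq 2d_\infty$, and then conclude via the bi-Lipschitz (hence topological) equivalence with $d_\infty$, which induces the natural topology. Nothing is missing.
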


\begin{proof}
Let $d$ be a partial distance-preserving, distance-increasing
metric on $X\times Y$. Inequality \eqref{2:1.9} implies that
$d_\infty\leq d\leq2d_\infty$. Hence the spaces $(X\times
Y,d_\infty)$ and $(X\times Y,d)$ have the coinciding sets of
convergent sequences. Consequently these spaces have the same
topology. Moreover, it is well-known that $d_\infty$ induces the
natural topology on $X\times Y$. Therefore the topology of the
space $(X\times Y,d)$ also is natural.
\end{proof}

Proposition \ref{2:p1.4} admits a partial converse.
\begin{proposition}\label{2:p1.6}
Let $(X,d_X)$ and $(Y,d_Y)$ be two metric spaces. If $d$ is a
metric on $X\times Y$ such that double inequality \eqref{2:1.9}
holds, then $d$ is partial distance-preserving and
\begin{equation}\label{2:1.11}
d((x_1,y_1),(x_2,y_2))\leq2d((x_3,y_3),(x_4,y_4))
\end{equation}
whenever inequalities \eqref{2:1.2} hold.
\end{proposition}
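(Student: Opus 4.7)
The plan is to exploit the two-sided sandwich $d_\infty \leq d \leq d_1$ directly: the right-hand metric $d_1$ gives a sharp upper bound on $d$ in terms of the factor distances, and the left-hand metric $d_\infty$ gives a sharp lower bound. Both parts of the conclusion follow by chaining these bounds and using only the elementary inequality $a+b\leq 2\max\{a,b\}$ for non-negative reals.

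First I would verify the partial distance-preserving property. For points of the form $(x_1,y),(x_2,y)$ a direct computation gives
\[
d_\infty((x_1,y),(x_2,y))=\max\{d_X(x_1,x_2),0\}=d_X(x_1,x_2)
\]
and similarly $d_1((x_1,y),(x_2,y))=d_X(x_1,x_2)$, so the two outer terms of \eqref{2:1.9} coincide and force $d((x_1,y),(x_2,y))=d_X(x_1,x_2)$. The same argument applied to pairs of the form $(x,y_1),(x,y_2)$ yields the second equality in \eqref{2:1.3}.

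For the factor-of-$2$ monotonicity, I would assume the hypotheses \eqref{2:1.2} and concatenate the inequalities. From the right half of \eqref{2:1.9},
\[
d((x_1,y_1),(x_2,y_2))\leq d_1((x_1,y_1),(x_2,y_2))=d_X(x_1,x_2)+d_Y(y_1,y_2),
\]
and the hypotheses upgrade this to $d_X(x_3,x_4)+d_Y(y_3,y_4)$. Now the elementary bound $a+b\leq 2\max\{a,b\}$ gives
\[
d_X(x_3,x_4)+d_Y(y_3,y_4)\leq 2\max\{d_X(x_3,x_4),d_Y(y_3,y_4)\}=2d_\infty((x_3,y_3),(x_4,y_4)),
\]
and finally the left half of \eqref{2:1.9} applied at $(x_3,y_3),(x_4,y_4)$ replaces $d_\infty$ by $d$, producing \eqref{2:1.11}.

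Honestly, there is no serious obstacle here; the only thing to watch is to apply the two halves of \eqref{2:1.9} to different pairs of points (the upper bound $d_1$ at the pair with the smaller factor distances, the lower bound $d_\infty$ at the pair with the larger ones), and to insert the trivial but crucial step $a+b\leq 2\max\{a,b\}$, which is exactly where the factor $2$ in \eqref{2:1.11} originates and which explains why one does not recover the full distance-increasing property \eqref{2:1.1}.
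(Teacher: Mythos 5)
Your proof is correct and follows exactly the paper's own route: the partial distance-preserving property is forced because $d_\infty$ and $d_1$ agree on pairs sharing a coordinate, and the factor-$2$ bound comes from chaining $d\leq d_1$, the hypotheses \eqref{2:1.2}, the elementary inequality $a+b\leq 2\max\{a,b\}$, and $d_\infty\leq d$ at the second pair of points. You have merely written out the details that the paper's two-line proof leaves implicit.
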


\begin{figure}[hbt]\centering
\begin{tabular}{c|c|c|c|c|c|c|c|c|c}
$\phantom{\sum\limits_0 x_1}$ & $x_1\ y_1$ & $x_1\ y_2$ & $x_1\
y_3$ & $x_2\ y_1$ & $x_2\ y_2$ & $x_2\ y_3$
& $x_3\ y_1$ & $x_3\ y_2$ & $x_3\ y_3$\\
\hline $\phantom{\int\limits_0^1}\genfrac{}{}{0pt}{0}{x_1}{y_1}$ &
{\bf0}&1&2&{\bf1}&1&2&{\bf2}&2&2\\
\hline $\phantom{\int\limits_0^1}\genfrac{}{}{0pt}{0}{x_1}{y_2}$ &
1&{\bf0}&1&1&{\bf1}&2&2&{\bf2}&2\\\hline
$\phantom{\int\limits_0^1}\genfrac{}{}{0pt}{0}{x_1}{y_3}$ &
2&1&{\bf0}&2&2&{\bf1}&2&2&{\bf2}\\\hline
$\phantom{\int\limits_0^1}\genfrac{}{}{0pt}{0}{x_2}{y_1}$ &
{\bf1}&1&2&{\bf0}&1&2&{\bf1}&2&2\\\hline
$\phantom{\int\limits_0^1}\genfrac{}{}{0pt}{0}{x_2}{y_2}$ &
1&{\bf1}&2&1&{\bf0}&1&2&{\bf1}&2\\\hline
$\phantom{\int\limits_0^1}\genfrac{}{}{0pt}{0}{x_2}{y_3}$ &
2&2&{\bf1}&2&1&{\bf0}&2&2&{\bf1}\\\hline
$\phantom{\int\limits_0^1}\genfrac{}{}{0pt}{0}{x_3}{y_1}$ &
{\bf2}&2&2&{\bf1}&2&2&{\bf0}&1&2\\\hline
$\phantom{\int\limits_0^1}\genfrac{}{}{0pt}{0}{x_3}{y_2}$ &
2&{\bf2}&2&2&{\bf1}&2&1&{\bf0}&1\\\hline
$\phantom{\int\limits_0^1}\genfrac{}{}{0pt}{0}{x_3}{y_3}$ &
2&2&{\bf2}&2&2&{\bf1}&2&1&{\bf0}\\
\end{tabular}
\caption{The distance-matrix of a space $(X\times Y,d)$ for
$X=\{x_1,x_2,x_3\}$ and $Y=\{y_1,y_2,y_3\}$.}
\end{figure}

\begin{proof}
The first part of the proposition directly follows from
\eqref{2:1.9}, because $d_\infty$ and $d_1$ is partial
distance-preserving. To prove the second part we may use
 the following elementary inequality
$$
a+b\leq2\max\{a,b\}
$$
which holds for all $a,b\in\mathbb R$.
\end{proof}

\begin{example}\label{e2.7}
Let $(X,d_X)$ and $(Y,d_Y)$ be two three-point metric spaces such
that
$$
d_X(x_i,x_j)=d_Y(y_i,y_j)=|i-j|
$$
for all $x_i,x_j\in X$ and all $y_i,y_j\in Y$, $i,j\in\{1,2,3\}$.
Consider the metric space $(X\times Y,d)$ for which the metric $d$
is defined by the distance-matrix from Fig.~1. Then double
inequality \eqref{2:1.9} holds for all $(x_i,y_i)\in X\times Y,\
i=1,2$, and moreover we have the equalities
$$
1=d((x_1,y_1),(x_2,y_2))=\frac12d((x_2,y_2),(x_3,y_3)).
$$
Consequently $d$ is not distance-increasing and $2$ is the best
possible constant in inequality \eqref{2:1.11}.
\end{example}
The product space $(X\times Y,d)$ inherits many useful properties
of the factors if $d$ is distance-increasing and partial
distance-preserving. Recall that a metric space $(X,d)$ is {\it
proper} if each closed and bounded set $A\subseteq X$ is compact.
\begin{proposition}\label{2:p1.7}
Let $(X,d_X)$ and $(Y,d_Y)$ be two metric spaces. If $d$ is a
metric on $X\times Y$ such that \eqref{2:1.9} holds, then the
following statements are true.
\begin{itemize}

\item[$(i)$] $(X\times Y,d)$ is bounded if and only if $(X,d_X)$
and $(Y,d_Y)$ are bounded.

\item[$(ii)$] $(X\times Y,d)$ is complete if and only if $(X,d_X)$
and $(Y,d_Y)$ are complete.

\item[$(iii)$] $(X\times Y,d)$ is proper if and only if $(X,d_X)$
and $(Y,d_Y)$ are proper.

\end{itemize}
\end{proposition}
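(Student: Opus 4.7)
The plan is to reduce all three statements to the corresponding facts for the particular metric $d_\infty$, exploiting the fact that the hypothesis (\ref{2:1.9}) forces $d$ to be bi-Lipschitz equivalent to $d_\infty$. Indeed, the elementary inequality $d_1 \leq 2d_\infty$ used at the end of the proof of Proposition \ref{2:p1.6} shows that (\ref{2:1.9}) implies
\begin{equation*}
d_\infty((x_1,y_1),(x_2,y_2)) \leq d((x_1,y_1),(x_2,y_2)) \leq 2d_\infty((x_1,y_1),(x_2,y_2)).
\end{equation*}
Hence $(X\times Y,d)$ and $(X\times Y,d_\infty)$ share the same bounded sets, the same Cauchy sequences, the same convergent sequences, and (consequently) the same compact sets. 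Thus it suffices to establish (i)--(iii) with $d$ replaced by $d_\infty$.

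I would then use two elementary tools. First, Proposition \ref{2:p1.6} guarantees that $d$ (and in particular $d_\infty$) is partial distance-preserving, so for any fixed $y_0\in Y$ the map $\iota_X\colon X\to X\times Y$, $x\mapsto (x,y_0)$, is an isometric embedding, and symmetrically for $Y$; moreover $\iota_X(X)$ is $d_\infty$-closed in $X\times Y$. Second, the projections $\pi_X,\pi_Y$ are $1$-Lipschitz with respect to $d_\infty$, and a sequence $(x_n,y_n)$ is $d_\infty$-Cauchy (respectively $d_\infty$-convergent) if and only if both $(x_n)$ and $(y_n)$ are Cauchy (respectively convergent) in the factors.

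With these tools (i) is immediate from the identity $\diam_{d_\infty}(X\times Y)=\max\{\diam(X),\diam(Y)\}$; (ii) follows because a $d_\infty$-Cauchy sequence in $X\times Y$ converges componentwise, and because any Cauchy sequence in a factor transports via the isometric embedding to a Cauchy sequence in the product. For (iii), if both factors are proper and $A\subseteq X\times Y$ is $d_\infty$-closed and bounded, then $\overline{\pi_X(A)}$ and $\overline{\pi_Y(A)}$ are closed and bounded in the factors, hence compact; so $A$ is a closed subset of the compact product $\overline{\pi_X(A)}\times\overline{\pi_Y(A)}$ and is therefore compact. Conversely, if the product is proper, then for any closed bounded $B\subseteq X$ the set $\iota_X(B)=B\times\{y_0\}$ is closed and bounded in $X\times Y$, hence compact, so $B$ is compact; similarly for $Y$.

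No step presents a serious obstacle. The only point requiring some care is the forward direction of (iii): one must invoke that the finite product of compact metric spaces is compact (used implicitly for $\overline{\pi_X(A)}\times\overline{\pi_Y(A)}$), and one must verify that in the product topology on a proper product, each coordinate slice $\iota_X(B)$ really is closed, which is where partial distance preservation (from Proposition \ref{2:p1.6}) does the work.
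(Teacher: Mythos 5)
Your proof is correct and follows essentially the same route as the paper: both reduce matters to $d_\infty$ via the sandwich $d_\infty\leq d\leq d_1\leq 2d_\infty$ coming from \eqref{2:1.9}, prove the forward direction of $(iii)$ by trapping a closed bounded set (respectively, a closed $d$-ball in the paper) inside a compact product coming from the factors, and prove the converse by noting that slices $B\times\{y_0\}$, respectively $(X\times\{y_1\})\cap B_d((x_1,y_1),r)$, are closed, bounded and isometric to subsets of the factors thanks to partial distance preservation (Proposition \ref{2:p1.6}). The only cosmetic difference is that the paper works with the characterization ``proper iff every closed ball is compact'' and the inclusion $B_d((x_1,y_1),r)\subseteq B_{d_X}(x_1,r)\times B_{d_Y}(y_1,r)$, while you work with arbitrary closed bounded sets and their projections.
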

\begin{proof}
Propositions $(i)$ and $(ii)$ can be obtained by the standard
arguments.

For the proof of  $(iii)$ observe that a metric space $(Z,\rho)$
is proper if and only if every closed ball
$$
B_\rho(a,r):=\{x\in Z:\rho(x,a)\leq r\}
$$
is compact. Suppose that $(X,d_X)$ and $(Y,d_Y)$ are proper. From
the first inequality in \eqref{2:1.9} we obtain
$$
B_d((x_1,y_1),r)\subseteq
B_{d_\infty}((x_1,y_1),r)=B_{d_X}(x_1,r)\times B_{d_Y}(y_1,r).
$$
The last direct product is compact because the balls
$B_{d_X}(x_1,r)$ and $B_{d_Y}(y_1,r)$ are compact. Hence
$B_d((x_1,y_1),r)$ is compact as a closed subset of a compact set.

Suppose that $(X\times Y,d)$ is proper. By Proposition
\eqref{2:p1.6} $d$ is partial distance-preserving. Hence for every
closed ball $B_d((x_1,y_1),r)$ the sets
\begin{equation}\label{2:1.12}
(X\times\{y_1\})\cap B_d((x_1,y_1),r)\quad \text{and}\quad
(\{x_1\}\times Y)\cap B_d((x_1,y_1),r)
\end{equation}
are isometric to the balls $B_{d_X}(x_1,r)$ and $B_{d_Y}(y_1,r)$,
respectively. Since sets $X\times\{y_1\}$ and $\{x_1\}\times Y$
are closed, the sets in \eqref{2:1.12}, and hence the closed balls
$B_{d_X}(x_1,r)$ and $B_{d_Y}(y_1,r)$, are compact.
\end{proof}
\begin{theorem}\label{t2.8}
Let $(X,d_X)$ and $(Y,d_Y)$ be metric spaces and let $d$ be a
partial distance-preserving metric on $X\times Y$ such that the
inequality
\begin{equation}\label{2.14}
d_\infty((x_1,y_1),(x_2,y_2))\leq d((x_1,y_1),(x_2,y_2))
\end{equation}
holds for all $(x_i,y_i)\in X\times Y,\ i=1,2$. Then $d$ is an
ultrametric if and only if $d_X$ and $d_Y$ are ultrametrics and
$d=d_\infty$.
\end{theorem}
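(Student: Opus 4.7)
My plan is to prove the two implications separately, with the ``only if'' direction being the substantive one, though neither is difficult once the partial distance-preserving hypothesis is used correctly.

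For the ``if'' direction, I would assume $d_X$ and $d_Y$ are ultrametrics and $d=d_\infty$. Then a direct computation shows $d_\infty$ is an ultrametric: for three points $(x_i,y_i)\in X\times Y$, $i=1,2,3$,
\begin{equation*}
d_\infty((x_1,y_1),(x_3,y_3))=\max\{d_X(x_1,x_3),d_Y(y_1,y_3)\}
\end{equation*}
is bounded above by $\max\{d_X(x_1,x_2),d_X(x_2,x_3),d_Y(y_1,y_2),d_Y(y_2,y_3)\}$, which in turn equals $\max\{d_\infty((x_1,y_1),(x_2,y_2)),d_\infty((x_2,y_2),(x_3,y_3))\}$.

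For the ``only if'' direction, assume $d$ is an ultrametric on $X\times Y$ satisfying \eqref{2.14} and the partial distance-preserving identities \eqref{2:1.3}. First I would show that $d_X$ and $d_Y$ inherit the ultrametric property: fix any $y\in Y$, then for $x_1,x_2,x_3\in X$ the partial distance-preserving property gives $d_X(x_i,x_j)=d((x_i,y),(x_j,y))$, and applying the ultrametric inequality for $d$ to the points $(x_i,y)$ yields $d_X(x_1,x_3)\leq\max\{d_X(x_1,x_2),d_X(x_2,x_3)\}$. The same argument with a fixed $x\in X$ handles $d_Y$.

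The key remaining step is to establish $d=d_\infty$. Since \eqref{2.14} already gives $d_\infty\leq d$, I need the reverse inequality. The trick is to route through the intermediate point $(x_2,y_1)$: the ultrametric inequality for $d$ gives
\begin{equation*}
d((x_1,y_1),(x_2,y_2))\leq\max\{d((x_1,y_1),(x_2,y_1)),\,d((x_2,y_1),(x_2,y_2))\},
\end{equation*}
and the partial distance-preserving identities \eqref{2:1.3} convert the right-hand side into $\max\{d_X(x_1,x_2),d_Y(y_1,y_2)\}=d_\infty((x_1,y_1),(x_2,y_2))$. Combined with \eqref{2.14} this gives $d=d_\infty$.

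There is no real obstacle here; the only subtle point is recognizing that the intermediate point $(x_2,y_1)$ (or equivalently $(x_1,y_2)$) turns the ultrametric inequality on $d$ into a one-line proof of $d\leq d_\infty$ via the partial distance-preserving identities. The rest is routine.
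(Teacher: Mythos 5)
Your proof is correct and follows essentially the same route as the paper: the direct computation for the ``if'' direction, the restriction to slices $X\times\{y\}$ and $\{x\}\times Y$ to show $d_X$ and $d_Y$ are ultrametrics, and the ultrametric inequality through an intermediate point with one coordinate swapped (you use $(x_2,y_1)$, the paper uses $(x_1,y_2)$ — a purely symmetric variant) combined with the partial distance-preserving identities to get $d\leq d_\infty$.
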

\begin{proof}
Suppose that $d_X$ and $d_Y$ are ultrametrics. Then for all
$(x_i,y_i)\in X\times Y,\ i=1,2,3,$ we obtain
\begin{multline*}
\max\{d_\infty((x_1,y_1),(x_2,y_2)),d_\infty((x_2,y_2),(x_3,y_3))\}\\
=\max\{\max\{d_X(x_1,x_2),d_Y(y_1,y_2)\},\max\{d_X(x_2,x_3),d_Y(y_2,y_3)\}\}\\=
\max\{\max\{d_X(x_1,x_2),d_X(x_2,x_3)\},\max\{d_Y(y_1,y_2)d_Y(y_2,y_3)\}\}\\\geq
\max\{d_X(x_1,x_3),d_Y(y_1,y_3)\}=d_\infty((x_1,y_1),(x_3,y_3)),
\end{multline*}
i.e., $(X\times Y,d_\infty)$ is an ultrametric space if $(X,d_X)$
and $(Y,d_Y)$ are ultrametric.

Conversely, let $(X\times Y,d)$ be an ultrametric space. Since $d$
is partial distance-preserving  we have
\begin{multline*}
d_X(x_1,x_3)=d((x_1,y),(x_3,y))\\\leq\max\{d((x_1,y),(x_2,y)),d((x_2,y),(x_3,y))\}\\
=\max\{d_X(x_1,x_2),d_X(x_2,x_3)\}
\end{multline*}for every $y\in Y$ and
$x_1,x_2,x_3\in X$.
 Hence $d_X$
is an ultrametric. A similar argument yields that $d_Y$ is an
ultrametric if $d$ is an ultrametric. To prove that $d=d_\infty$
it is sufficient to show that the inequality
\begin{equation}\label{2.16}
d((x_1,y_1),(x_2,y_2))\leq d_\infty((x_1,y_1),(x_2,y_2))
\end{equation}
holds for all $(x_1,y_1),(x_2,y_2)\in X\times Y$. Since
 $d$ is a partial distance-preserving ultrametric, we have
\begin{multline*}
d((x_1,y_1),(x_2,y_2))\\\leq
\max\{d((x_1,y_1),(x_1,y_2)),d((x_1,y_2),(x_2,y_2))\}\\=
\max\{d_Y(y_1,y_2),d_X(x_1,x_2)\},
\end{multline*}
i.e., \eqref{2.16} holds.
\end{proof}
\begin{remark}\label{r:3.10}
Let $(X,d_X)$, $(Y,d_Y)$ and $(X\times Y,d)$ be metric spaces such
that $d_\infty\leq d$. It follows from Proposition \ref{2:p1.6}
and inequality \eqref{3:12*} that a metric $d$ is partial
distance-preserving if and only if $d\leq d_1$.
\end{remark}
\begin{corollary}\label{c2.9}
Let $(X,d_X)$ and $(Y,d_Y)$ be metric spaces and let $d$ be a
distance-increas\-ing and partial distance-preserving metric on
the product $X\times Y$. Then $d$ is an ultrametric if and only if
$d_X$ and $d_Y$ are ultrametrics and $d=d_\infty$.
\end{corollary}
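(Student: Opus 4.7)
The plan is to reduce Corollary \ref{c2.9} to Theorem \ref{t2.8}, since the two statements have identical conclusions; the only work is checking that the hypothesis of the theorem is implied by the (stronger) hypothesis of the corollary. Concretely, Proposition \ref{2:p1.4} applied to a distance-increasing, partial distance-preserving metric $d$ on $X\times Y$ gives the double inequality
\[
d_\infty((x_1,y_1),(x_2,y_2))\leq d((x_1,y_1),(x_2,y_2))\leq d_1((x_1,y_1),(x_2,y_2)),
\]
so in particular the lower bound $d_\infty\leq d$ required by Theorem \ref{t2.8} holds, and $d$ is already assumed to be partial distance-preserving.

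For the forward implication, I would simply invoke Theorem \ref{t2.8}: if $d$ is an ultrametric, then $d_X$ and $d_Y$ are ultrametrics and $d=d_\infty$.

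For the reverse implication, suppose $d_X$ and $d_Y$ are ultrametrics and $d=d_\infty$. The first half of the proof of Theorem \ref{t2.8} (the direct max-computation) shows that $d_\infty$ satisfies the ultra-triangle inequality whenever both factor metrics do, so $d=d_\infty$ is an ultrametric. Note also that $d_\infty$ is distance-increasing and partial distance-preserving (Example \ref{2:e1.3}), so the hypotheses of the corollary are internally consistent in this direction.

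There is no genuine obstacle: the corollary is essentially a convenient repackaging of Theorem \ref{t2.8}, with Proposition \ref{2:p1.4} serving as the bridge that upgrades the abstract bound $d_\infty\leq d$ to the more concrete combination ``distance-increasing plus partial distance-preserving.'' The proof should therefore occupy no more than a few lines.
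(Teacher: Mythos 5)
Your proposal is correct and follows exactly the paper's route: the paper proves Corollary \ref{c2.9} by combining Theorem \ref{t2.8} with Proposition \ref{2:p1.4}, which supplies the needed bound $d_\infty\leq d$ for a distance-increasing, partial distance-preserving metric. Your additional remarks (the explicit max-computation for the converse, and the consistency check via Example \ref{2:e1.3}) are fine but already contained in the proof of Theorem \ref{t2.8}.
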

\begin{proof}
It follows from Theorem \ref{t2.8}  and Proposition \ref{2:p1.4}.
\end{proof}

\section{Products of   packing numbers and  \\products of
ultrametric spaces}

In this section we give some  conditions under which a product of
metric spaces is ultrametric.
\begin{theorem}\label{t:3:1}
Let $(X,d_X)$ and $(Y,d_Y)$ be ultrametric spaces and let $d$ be a
partial distance-preserving metric on $(X\times Y)$ such that the
inequality
\begin{equation}\label{3:1}
d_\infty((x_1,y_1),(x_2,y_2))\leq d((x_1,y_1),(x_2,y_2))
\end{equation}
holds for all $((x_1,y_1),(x_2,y_2))\in (X\times Y)\times(X\times
Y)$. Then the following statements are equivalent.
\begin{itemize}
\item[$(i)$] $d$ is an ultrametric on $X\times Y$.

\item[$(ii)$] The equality
\begin{equation}\label{3:2}
\mathcal M_\varepsilon(W\times Z)=\mathcal M_\varepsilon(W)\cdot
\mathcal M_\varepsilon(Z)
\end{equation}
holds for all compact sets $W\subseteq X$ and $Z\subseteq Y$ and
every $\varepsilon>0$.
\end{itemize}
\end{theorem}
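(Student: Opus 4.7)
The proof naturally splits, with Theorem~\ref{t2.8} doing the heavy lifting in both directions by pinning $d$ to $d_\infty$ whenever $d$ is an ultrametric.

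For $(i)\Rightarrow(ii)$, since $d_X,d_Y$ are ultrametric, $d$ is partial distance-preserving with $d_\infty\leq d$, and $d$ is ultrametric, Theorem~\ref{t2.8} gives $d=d_\infty$; moreover the easy direction of that same theorem makes $(X\times Y,d_\infty)$ itself an ultrametric space. The lower bound $\mathcal M_\varepsilon(W\times Z)\geq\mathcal M_\varepsilon(W)\cdot\mathcal M_\varepsilon(Z)$ is immediate: if $A\subseteq W$ and $B\subseteq Z$ are $\varepsilon$-distinguishable of maximal size, any two distinct points of $A\times B$ differ in at least one coordinate, so their $d_\infty$-distance exceeds $\varepsilon$, whence $A\times B$ is $\varepsilon$-distinguishable. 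For the matching upper bound I invoke Theorem~\ref{t:1.6}, which identifies $\mathcal M_\varepsilon$ with $\mathcal N_\varepsilon$ on compact subsets of any ultrametric space; applied to $W$, $Z$ and $W\times Z$, it reduces the question to $\mathcal N_\varepsilon(W\times Z)\leq \mathcal N_\varepsilon(W)\cdot\mathcal N_\varepsilon(Z)$, which follows at once by taking products of optimal $\varepsilon$-nets (under $d_\infty$ the distance to the nearest product point is the maximum of the two coordinate distances, hence $\leq\varepsilon$).

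For $(ii)\Rightarrow(i)$, I argue by contraposition. Suppose $d$ is not an ultrametric. Since $d_X,d_Y$ are ultrametric, $d$ is partial distance-preserving, and $d_\infty\leq d$, Theorem~\ref{t2.8} rules out $d=d_\infty$, so there exist $(x_1,y_1),(x_2,y_2)\in X\times Y$ with
\[
d_\infty((x_1,y_1),(x_2,y_2))<d((x_1,y_1),(x_2,y_2)).
\]
Partial distance-preservation forces $x_1\neq x_2$ and $y_1\neq y_2$, as otherwise $d$ and $d_\infty$ would agree on this pair. Fix $\varepsilon$ with $d_\infty((x_1,y_1),(x_2,y_2))\leq\varepsilon<d((x_1,y_1),(x_2,y_2))$ and take the compact two-point sets $W:=\{x_1,x_2\}$ and $Z:=\{y_1,y_2\}$. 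Both $d_X(x_1,x_2)$ and $d_Y(y_1,y_2)$ are at most $\varepsilon$, so $\mathcal M_\varepsilon(W)=\mathcal M_\varepsilon(Z)=1$; on the other hand $\{(x_1,y_1),(x_2,y_2)\}\subseteq W\times Z$ is $\varepsilon$-distinguishable under $d$, giving $\mathcal M_\varepsilon(W\times Z)\geq 2$. This violates \eqref{3:2} for these $W$, $Z$ and $\varepsilon$.

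The only genuinely delicate step is the upper bound in $(i)\Rightarrow(ii)$: in a general metric space $\mathcal M_\varepsilon(W\times Z)$ can exceed $\mathcal M_\varepsilon(W)\cdot\mathcal M_\varepsilon(Z)$, because an $\varepsilon$-distinguishable set in the product need not be a grid. It is precisely the ultrametric identity $\mathcal M_\varepsilon=\mathcal N_\varepsilon$ supplied by Theorem~\ref{t:1.6} that lets one swap packings for covering numbers, which are submultiplicative under $d_\infty$. Direction $(ii)\Rightarrow(i)$, by contrast, is a short two-point contradiction once Theorem~\ref{t2.8} guarantees a witness to $d\neq d_\infty$.
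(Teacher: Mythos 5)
Your proof is correct and follows essentially the same route as the paper: the forward direction repeats the argument of the paper's Lemma~\ref{l:3:2} (Theorem~\ref{t2.8} forces $d=d_\infty$, packings are supermultiplicative, coverings are submultiplicative, and Theorem~\ref{t:1.6}~$(iii)$ converts one into the other on the compacts $W$, $Z$, $W\times Z$), while the reverse direction is the paper's two-point contradiction with Theorem~\ref{t2.8} supplying the pair where $d>d_\infty$. The only cosmetic differences are your contrapositive phrasing and the freedom in choosing $\varepsilon$ in the interval $[d_\infty,d[$, where the paper fixes $\varepsilon=d_\infty((x_1,y_1),(x_2,y_2))$.
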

\begin{lemma}\label{l:3:2}
Let $(X,d_X)$, $(Y,d_Y)$ and $(X\times Y,d)$ be ultrametric
spaces. Suppose that $d$ is partial distance-preserving and
\eqref{3:1} holds for all $((x_1,y_1),(x_2,y_2))\in(X\times
Y)\times(X\times Y)$. Then the equalities \eqref{3:2},
\begin{equation}\label{3:3}
\mathcal N_\varepsilon(W\times Z)=\mathcal N_\varepsilon(W)\cdot
\mathcal N_\varepsilon(Z),
\end{equation}
and
\begin{equation}\label{3:4}
\mathcal M_\varepsilon(W\times Z)=\mathcal N_\varepsilon(W\times
Z)
\end{equation}
hold for all compact sets $W\subseteq X$, $Z\subseteq Y$ and every
$\varepsilon>0$.
\end{lemma}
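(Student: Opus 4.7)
The strategy has three stages, in order: reduce $d$ to $d_\infty$; exploit that closed balls in an ultrametric space are equivalence classes; then deduce the three equalities from that structural fact.

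First I would apply Theorem \ref{t2.8} to the hypothesis that $(X\times Y,d)$ is an ultrametric, partial distance-preserving, with $d_\infty\leq d$. This immediately gives $d=d_\infty$. So for the rest of the argument I may assume the metric on the product is $d_\infty((x_1,y_1),(x_2,y_2))=\max\{d_X(x_1,x_2),d_Y(y_1,y_2)\}$, and in particular
\[
B_d((x,y),\varepsilon)=B_{d_X}(x,\varepsilon)\times B_{d_Y}(y,\varepsilon).
\]

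Next I would use the standard fact that in any ultrametric space $(U,\rho)$ the relation $x\sim_\varepsilon y\Longleftrightarrow\rho(x,y)\leq\varepsilon$ is an equivalence relation (transitivity is precisely the ultra-triangle inequality), and that its equivalence classes are exactly the closed $\varepsilon$-balls. Restricting to a compact subset $W\subseteq U$, this means $\mathcal N_\varepsilon(W)$ equals the number of $\sim_\varepsilon$-classes of $W$: taking one representative per class produces an $\varepsilon$-net inside $W$, and no smaller net can exist because distinct classes are mutually at distance $>\varepsilon$. The same counting shows that any $\varepsilon$-distinguishable set $A\subseteq W$ meets each class at most once and that picking one point per class gives such an $A$, so $\mathcal M_\varepsilon(W)$ is the same number of classes. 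This reproves \eqref{3:4} in a form that is convenient for products and simultaneously is the content of Theorem \ref{t:1.6} for the product itself.

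Now I would combine the two observations. Because $d=d_\infty$, the $\sim_\varepsilon$-equivalence relation on $W\times Z$ is the product of the equivalence relations on $W$ and on $Z$:
\[
(x_1,y_1)\sim_\varepsilon(x_2,y_2)\Longleftrightarrow x_1\sim_\varepsilon x_2\text{ and }y_1\sim_\varepsilon y_2.
\]
Hence the number of equivalence classes in $W\times Z$ equals the product of the numbers of classes in $W$ and in $Z$. By the previous paragraph this number equals each of $\mathcal N_\varepsilon(W\times Z)$ and $\mathcal M_\varepsilon(W\times Z)$, while the corresponding numbers for the factors equal $\mathcal N_\varepsilon(W),\mathcal M_\varepsilon(W),\mathcal N_\varepsilon(Z),\mathcal M_\varepsilon(Z)$. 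This simultaneously yields \eqref{3:2}, \eqref{3:3} and \eqref{3:4}.

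I do not anticipate a serious obstacle; the only point requiring care is verifying that optimal $\varepsilon$-nets and maximal $\varepsilon$-distinguishable sets are truly governed by the equivalence-class count (this uses both that $W$ is compact, hence the counts are finite, and that the nets are required to sit inside $W$). Once Theorem \ref{t2.8} has done the work of reducing to $d_\infty$, the rest is the combinatorics of a product of finite partitions.
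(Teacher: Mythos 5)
Your proof is correct, and after the first step it takes a genuinely different route from the paper's. Both arguments begin by invoking Theorem \ref{t2.8} to conclude $d=d_\infty$. From there the paper proceeds by a squeeze: it proves the two one-sided product inequalities $\mathcal N_\varepsilon(W\times Z)\le\mathcal N_\varepsilon(W)\,\mathcal N_\varepsilon(Z)$ (product of nets) and $\mathcal M_\varepsilon(W\times Z)\ge\mathcal M_\varepsilon(W)\,\mathcal M_\varepsilon(Z)$ (product of distinguishable sets), then cites Theorem \ref{t:1.6}\,(iii) for $W$, for $Z$, and for the compact set $W\times Z$ in the ultrametric space $(X\times Y,d_\infty)$, and closes the resulting chain of inequalities into equalities. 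You instead identify, in any ultrametric space, both $\mathcal N_\varepsilon$ and $\mathcal M_\varepsilon$ of a compact set with the number of classes of the equivalence relation ``distance $\le\varepsilon$'' (the closed $\varepsilon$-balls), and note that for $d_\infty$ this relation on $W\times Z$ is exactly the product of the relations on $W$ and $Z$, so the class count multiplies; this yields \eqref{3:2}, \eqref{3:3} and \eqref{3:4} simultaneously as exact equalities. Your route is more self-contained: it re-derives the content of Theorem \ref{t:1.6}\,(iii) in the ultrametric case directly (bypassing the $t_0$-machinery behind Lemma \ref{l:3.2}) and makes the multiplicativity transparent, at the cost of a slightly longer write-up; the paper's route is shorter given that Theorem \ref{t:1.6} is already in place. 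Two minor remarks: the lower bound $\mathcal N_\varepsilon(W)\ge$ (number of classes) does not actually require the net centers to lie in $W$, since every ball $B(c,\varepsilon)$ is contained in a single $\varepsilon$-class of the ambient ultrametric space; and the finiteness of the class counts, which you obtain from compactness of $W$ and $Z$, is what replaces the paper's explicit observation that $W\times Z$ is compact in $(X\times Y,d_\infty)$.
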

\begin{proof}
Let $W$ and $Z$ be compact sets $W\subseteq X,\ Z\subseteq Y$ and
let $\varepsilon>0$. Theorem \ref{t2.8} implies that $d=d_\infty$
if the conditions of the lemma hold. It follows from the
definition of the covering numbers that
\begin{equation}\label{3:5}
\mathcal N_\varepsilon(W\times Z)\leq\mathcal
N_\varepsilon(W)\cdot\mathcal N_\varepsilon(Z).
\end{equation}
Indeed, if $C_W$ and $C_Z$ are finite $\varepsilon$-nets for $W$
and, respectively, for $Z$, then the direct product $C_W\times
C_Z$ is a finite $\varepsilon$-net for $W\times Z$ in the space
$(X\times Y,d_\infty)$. Consequently, we obtain
$$
\mathcal N_\varepsilon(W\times Z)\leq \card(C_W)\cdot\card(C_Z).
$$
Using this inequality for $C_W$ and $C_Z$ with
$\card(C_W)=\mathcal N_\varepsilon(W)$ and $\card(C_Z)=\mathcal
N_\varepsilon(Z)$ we obtain \eqref{3:5}. Similarly, the definition
of the packing numbers implies  the inequality
\begin{equation}\label{3:6}
\mathcal M_\varepsilon(W\times Z)\geq\mathcal
M_\varepsilon(W)\cdot\mathcal M_\varepsilon(Z).
\end{equation}
for the subspace $W\times Z$ of the space $(X\times Y,d_\infty)$.

Statement $(iii)$ of Theorem \ref{t:1.6} gives
$$
\mathcal M_\varepsilon(W)=\mathcal N_\varepsilon(W)\quad
\text{and}\quad \mathcal M_\varepsilon(Z)=\mathcal
N_\varepsilon(Z)
$$
because $(X,d_X)$ and $(Y,d_Y)$ are ultrametric spaces. The metric
$d=d_\infty$ induces the natural topology on $X\times Y$. Thus
$W\times Z$ is compact in $(X\times Y,d_\infty)$, so  Theorem
\ref{t:1.6} $(iii)$ implies also the equality
$$
\mathcal M_\varepsilon(W\times Z)=\mathcal N_\varepsilon(W\times
Z).
$$
Consequently, from \eqref{3:5} and \eqref{3:6} we obtain
\begin{multline*}
\mathcal N_\varepsilon(W)\mathcal N_\varepsilon(Z)=\mathcal
M_\varepsilon(W)\mathcal M_\varepsilon(Z)\leq\mathcal
M_\varepsilon(W\times Z)\\=\mathcal N_\varepsilon(W\times
Z)\leq\mathcal N_\varepsilon(W)\mathcal N_\varepsilon(Z).
\end{multline*}
Equalities \eqref{3:2}--\eqref{3:4} are proved.
\end{proof}
\begin{proof}[\it Proof of Theorem \ref{t:3:1}.]
It was shown in Lemma \ref{l:3:2} that $(i)\Rightarrow(ii)$. To
prove  $(ii)\Rightarrow(i)$ suppose that \eqref{3:2} holds for
every $\varepsilon>0$ and all compacts $W\subseteq X,\ Z\subset Y$
but $(X\times Y,d)$ is not ultrametric. Then, by Theorem
\ref{t2.8}, there are points $(x_i,y_i)\in X\times Y,\ i=1,2,$
such that
\begin{equation}\label{3:7}
\max\{d_X(x_1,x_2),d_Y(y_1,y_2)\}<d((x_1,y_1),(x_2,y_2)).
\end{equation}
Write
\begin{equation}\label{3:8}
W:=\{x_1,x_2\},\qquad Z:=\{y_1,y_2\}
\end{equation}
and
\begin{equation}\label{3:9}
\varepsilon:=\max\{d_X(x_1,x_2),d_Y(y_1,y_2)\}.
\end{equation}
Then we evidently have
\begin{equation}\label{3:10}
\mathcal M_\varepsilon(W)=\mathcal M_\varepsilon(Z)=1.
\end{equation}
Note also that inequality \eqref{3:7} implies that the set
$\{(x_1,y_1),(x_2,y_2)\}$ is an $\varepsilon$-distinguish\-able
subset of $W\times Z$ in the space $(X\times Y,d)$. Hence, we have
the inequality $\mathcal M_\varepsilon(W\times Z)\geq2$. This
inequality and \eqref{3:10} contradict \eqref{3:2}. Hence, the
implication $(ii)\Rightarrow(i)$ holds.
\end{proof}
If $d$ is partial distance-preserving and $d_\infty\leq d$ but
only one from the spaces $(X,d_X)$ and $(Y,d_Y)$ is ultrametric,
then, generally, the metric space $(X\times Y,d)$ may be
nonultrametric even if \eqref{3:2} holds for all compact sets
$W\subseteq X,\ Z\subseteq Y$ and every $\varepsilon>0$.
\begin{example}\label{e3:3}
Let $X=\{x\}$ be an one-point metric space. Then $X$ is
ultrametric and for every $(Y,d_Y)$ there is a unique partial
distance-preserving metric $d=d_\infty$ on $X\times Y$, i.e., the
function
$$
(X\times Y,d)\ni(x,y)\longmapsto y\in(Y,d_Y)
$$
is an isometry if $d$ is partial distance-preserving. Furthermore,
it is clear that every $W\subseteq X$ is either empty or one-point
and
$$
\mathcal M_\varepsilon(\emptyset)=\mathcal M_\varepsilon(X)-1=0.
$$
Hence \eqref{3:2} holds for all compact sets $W\subseteq X,\
Z\subseteq Y$ and every $\varepsilon>0$ but $(X\times Y,d)$ is
ultrametric if and only if $(Y,d_Y)$ is ultrametric.
\end{example}
\begin{proposition}\label{p:3:4}
Let $(X,d_X)$ and $(Y,d_Y)$ be metric spaces and let $d$ be a
partial distance-preserving metric on $X\times Y$ such that
$d_\infty\leq d$. Then the space $(X\times Y,d)$ is ultrametric if
and only if the equalities
\begin{equation}\label{3:11}
\mathcal N_\varepsilon(W\times Z)=\mathcal M_\varepsilon(W\times
Z)
\end{equation}
and \eqref{3:2} hold for all compact sets $W\subseteq X,\
Z\subseteq Y$ and every $\varepsilon>0$.
\end{proposition}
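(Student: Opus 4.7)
The forward direction is immediate from the machinery already built. If $(X\times Y,d)$ is ultrametric, then Theorem~\ref{t2.8} (under the standing hypothesis that $d$ is partial distance-preserving and $d_\infty\le d$) gives both that $d_X$, $d_Y$ are ultrametrics and that $d=d_\infty$. Lemma~\ref{l:3:2} then supplies exactly the two equalities \eqref{3:11} (its \eqref{3:4}) and \eqref{3:2}.

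For the converse I would split the argument into three steps, each reducing to results already in the paper. First, extract ultrametricity of the factors from \eqref{3:11}. Fix any $y\in Y$ and any compact $W\subseteq X$; since $d$ is partial distance-preserving, the map $x\mapsto (x,y)$ is an isometry of $(W,d_X)$ onto the compact slice $W\times\{y\}\subseteq(X\times Y,d)$, so both $\mathcal N_\varepsilon$ and $\mathcal M_\varepsilon$ are preserved. Then \eqref{3:11} applied with $Z=\{y\}$ becomes
\begin{equation*}
\mathcal N_\varepsilon(W)=\mathcal M_\varepsilon(W)\qquad\text{for every compact }W\subseteq X\text{ and every }\varepsilon>0,
\end{equation*}
so Theorem~\ref{t:1.6}, implication $(iii)\Rightarrow(i)$, forces $(X,d_X)$ to be ultrametric. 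A symmetric argument (swapping the roles of the factors and using a singleton $W$) gives that $(Y,d_Y)$ is ultrametric.

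Second, upgrade $d_\infty\le d$ to $d=d_\infty$ using \eqref{3:2}. Suppose for contradiction that there exist $(x_1,y_1),(x_2,y_2)\in X\times Y$ with $d((x_1,y_1),(x_2,y_2))>d_\infty((x_1,y_1),(x_2,y_2))$, and set
\begin{equation*}
W:=\{x_1,x_2\},\quad Z:=\{y_1,y_2\},\quad \varepsilon:=\max\{d_X(x_1,x_2),d_Y(y_1,y_2)\}.
\end{equation*}
Then $d_X(x_1,x_2)\le\varepsilon$ and $d_Y(y_1,y_2)\le\varepsilon$, so $\mathcal M_\varepsilon(W)=\mathcal M_\varepsilon(Z)=1$ and the right-hand side of \eqref{3:2} is $1$. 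However, by the choice of $(x_1,y_1),(x_2,y_2)$ the pair $\{(x_1,y_1),(x_2,y_2)\}\subseteq W\times Z$ is $\varepsilon$-distinguishable in $(X\times Y,d)$, so $\mathcal M_\varepsilon(W\times Z)\ge 2$, contradicting \eqref{3:2}. Hence $d=d_\infty$.

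Finally, combine the two conclusions: $d_X$ and $d_Y$ are ultrametrics and $d=d_\infty$, so Theorem~\ref{t2.8} (in its ``$d_\infty$ is ultrametric on a product of ultrametrics'' direction, established in the first display of its proof) shows that $d$ is ultrametric. The only real choice in the proof is the selection of test sets: the ``slice'' trick with a singleton factor carries \eqref{3:11} to the hypothesis of Theorem~\ref{t:1.6}(iii) on each factor, while a carefully chosen two-point pair in each factor turns a single violation of $d=d_\infty$ into a violation of the multiplicativity \eqref{3:2}. I do not expect an obstacle beyond spotting these two reductions.
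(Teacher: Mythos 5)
Your proposal is correct and follows essentially the same route as the paper: the forward direction invokes Theorem~\ref{t2.8} together with Lemma~\ref{l:3:2} (equivalently, Lemma~\ref{l:3:5}, Theorem~\ref{t:3:1} and Theorem~\ref{t:1.6}$(iii)$), and the converse first uses \eqref{3:11} on singleton slices with Theorem~\ref{t:1.6} to make the factors ultrametric. The only cosmetic difference is that where the paper then simply cites Theorem~\ref{t:3:1}$(ii)\Rightarrow(i)$, you inline its two-point packing argument to deduce $d=d_\infty$ directly before applying Theorem~\ref{t2.8}, which is the same idea carried out by hand.
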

The following fact is included in the proof of Theorem \ref{t2.8}.
\begin{lemma}\label{l:3:5}
Let $(X,d_X)$ and $(Y,d_Y)$ be metric spaces and let $d$ be a
partial distance-preserving ultrametric on $X\times Y$. Then
$(X,d_X)$ and $(Y,d_Y)$ are ultrametric spaces.
\end{lemma}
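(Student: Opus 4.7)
The plan is to transfer the ultra-triangle inequality from $(X\times Y,d)$ down to each factor by exhibiting $X$ and $Y$ as isometric subspaces of $X\times Y$. Assuming $X$ and $Y$ are nonempty (otherwise the statement is vacuous, since then $X\times Y=\emptyset$), the partial distance-preserving hypothesis \eqref{2:1.3} is exactly the tool we need.

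First I would fix an arbitrary $y_0\in Y$ and consider the map $\iota_X\colon X\to X\times Y$ defined by $\iota_X(x)=(x,y_0)$. The first equality in \eqref{2:1.3} gives
\begin{equation*}
d(\iota_X(x_1),\iota_X(x_2))=d((x_1,y_0),(x_2,y_0))=d_X(x_1,x_2)
\end{equation*}
for all $x_1,x_2\in X$, so $\iota_X$ is an isometric embedding. Applying the ultra-triangle inequality for $d$ to the three points $\iota_X(x_1),\iota_X(x_2),\iota_X(x_3)$ and then peeling off $\iota_X$ on both sides yields
\begin{equation*}
d_X(x_1,x_3)\leq\max\{d_X(x_1,x_2),d_X(x_2,x_3)\},
\end{equation*}
which is the ultra-triangle inequality for $d_X$.

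A completely symmetric argument, using a fixed $x_0\in X$ and the isometric embedding $y\mapsto(x_0,y)$ together with the second equality in \eqref{2:1.3}, shows that $d_Y$ is an ultrametric as well. There is no real obstacle: the argument is essentially a one-liner, and indeed it is already contained verbatim in the first half of the converse direction of the proof of Theorem \ref{t2.8}. The point of isolating it as a lemma is simply that it uses only partial distance-preservation, not the stronger condition $d_\infty\leq d$ appearing in Theorem \ref{t2.8}.
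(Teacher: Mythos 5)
Your proof is correct and is essentially the paper's own argument: the paper simply remarks that the lemma "is included in the proof of Theorem \ref{t2.8}", where the converse direction applies the ultra-triangle inequality for $d$ to the points $(x_1,y),(x_2,y),(x_3,y)$ and uses \eqref{2:1.3} to conclude $d_X(x_1,x_3)\leq\max\{d_X(x_1,x_2),d_X(x_2,x_3)\}$, exactly your isometric-embedding computation. No discrepancy worth noting.
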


\begin{proof}[\it Proof of Proposition \ref{p:3:4}.] If $(X\times
Y,d)$ is ultrametric, then, by Lem\-ma~\ref{l:3:5}, $(X,d_X)$ and
$(Y,d_Y)$ are ultrametric. Consequently, \eqref{3:2} follows from
Theorem \ref{t:3:1}. The set $W\times Z$ is compact if $W$ and $Z$
are compact. Hence, \eqref{3:11}  follows from Theorem \ref{t:1.6}
$(iii)$.

Now suppose that \eqref{3:11} and \eqref{3:2} hold for all compact
sets $W\subseteq Z$, $Z\subseteq Y$ and every $\varepsilon>0$. To
prove that $(X\times Y,d)$ is ultrametric, it is sufficient, by
Theorem \ref{t:3:1}, to show that $(X,d_X)$ and $(Y,d_Y)$ are
ultrametric spaces. Using \eqref{3:11} with an one-point set $W$
we see that $ \mathcal N_\varepsilon(Z)=\mathcal M_\varepsilon(Z)
$ for every compact set $Z\subseteq Y$ and every $\varepsilon>0$,
because $d$ is partial distance-preserving. Hence, by Theorem
\ref{t:1.6}, $Y$ is an ultrametric space. Similarly $X$ is an
ultrametric space.
\end{proof}
The following example shows that  in Theorem \ref{t:3:1} the
packing numbers cannot be replaced by covering numbers.

\begin{figure}[thb]\centering
\begin{tabular}{c|c|c|c|c}
$\phantom{x_1\ }$&$x_1\ y_1$& $x_1\ y_2$& $x_2\ y_1$ &$x_2\
y_2$\\\hline
$\phantom{\int\limits_0^1}\genfrac{}{}{0pt}{0}{x_1}{y_1}$ &
0&1&1&$a$\\\hline
$\phantom{\int\limits_0^1}\genfrac{}{}{0pt}{0}{x_1}{y_2}$
&1&0&1&1\\\hline
$\phantom{\int\limits_0^1}\genfrac{}{}{0pt}{0}{x_2}{y_1}$ &
1&1&0&1\\\hline
$\phantom{\int\limits_0^1}\genfrac{}{}{0pt}{0}{x_2}{y_2}$
&$a$&1&1&0\\
\end{tabular}
\caption{The distance-matrix of a metric space $(X\times Y,d)$ for
$X=\{x_1,x_2\}$ and $Y=\{y_1,y_2\}$. Here $a$ is an arbitrary real
number from $[1,2]$.}
\end{figure}

\begin{example}\label{e:3:6}
Let $X=\{x_1,x_2\}$ and $Y=\{y_1,y_2\}$ be two-point metric spaces
with  metrics $d_X,d_Y$ such that
$$
d_X(x_1,x_2)=d_Y(y_1,y_2)=1.
$$
Let $(X\times Y,d)$ be a product of the spaces $(X,d_X)$ and
$(Y,d_Y)$  such that $d$ is generated by the distance-matrix from
Fig.~2. Then $d$ is a partial distance-preserving and
$d_\infty\leq d$. Moreover, a computation shows that \eqref{3:3}
holds for all $W\subseteq X$, $Z\subseteq Y$ and every
$\varepsilon>0$. Specifically we have
$$
\mathcal N_1(X\times Y)=\mathcal N_1(X)\cdot\mathcal N_1(Y)
$$
because
$$
B_d((x_1,y_2),1)\supseteq X\times Y.
$$

Note that $(X\times Y,d)$ is not an ultrametric space if
$1<a\leq2$.
\end{example}
\begin{proposition}\label{p:3:7}
Let $(X,d_X)$ and $(Y,d_Y)$ be ultrametric spaces and let $d$ be a
partial distance-preserving metric on $X\times Y$ such that
$d_\infty\leq d$. Suppose that  \eqref{3:3} holds for all compact
sets $W\subseteq Z,\ Z\subseteq Y$ and every $\varepsilon>0$. Then
\begin{equation}\label{3:12}
\min\{d((x_1,y_1),(x_2,y_2)),d((x_2,y_1),(x_1,y_2))\}=d_\infty\{(x_1,y_1),(x_2,y_2)\}
\end{equation}
holds for all $\{x_1,x_2\}\subseteq X$ and $\{y_1,y_2\}\subseteq
Y$.
\end{proposition}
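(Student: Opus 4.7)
The plan is to fix points $x_1,x_2\in X$ and $y_1,y_2\in Y$, reduce to the two-point compact sets $W:=\{x_1,x_2\}$ and $Z:=\{y_1,y_2\}$, and exploit the product formula \eqref{3:3} at the single critical scale $\varepsilon:=d_\infty((x_1,y_1),(x_2,y_2))=\max\{d_X(x_1,x_2),d_Y(y_1,y_2)\}$.

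First I would notice that swapping the first coordinates of the two pairs does not change the $d_\infty$-value, so $d_\infty((x_2,y_1),(x_1,y_2))=\varepsilon$ as well. Because $d_\infty\leq d$, both terms appearing under the $\min$ in \eqref{3:12} are automatically $\geq\varepsilon$, so it suffices to show at least one of them is $\leq\varepsilon$. The trivial subcases $x_1=x_2$ or $y_1=y_2$ follow immediately from the partial distance-preserving property of $d$, so I would henceforth assume $x_1\neq x_2$ and $y_1\neq y_2$.

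Next, $W$ and $Z$ are (finite, hence) compact, and by the very choice of $\varepsilon$ one has $d_X(x_1,x_2)\leq\varepsilon$ and $d_Y(y_1,y_2)\leq\varepsilon$, so each of $W$ and $Z$ is contained in a single closed $\varepsilon$-ball, giving $\mathcal N_\varepsilon(W)=\mathcal N_\varepsilon(Z)=1$. Applying the hypothesis \eqref{3:3} I would conclude $\mathcal N_\varepsilon(W\times Z)=1$. Hence there exists a single corner $c\in W\times Z$ with $W\times Z\subseteq B_d(c,\varepsilon)$.

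The last step, which is the only combinatorial point of the argument and what I regard as the (mild) obstacle, is the case check over the four possible choices of $c$. These four corners split into the two ``diagonal'' pairs $\{(x_1,y_1),(x_2,y_2)\}$ and $\{(x_2,y_1),(x_1,y_2)\}$, and any choice of $c$ belongs to exactly one of them. The inclusion $W\times Z\subseteq B_d(c,\varepsilon)$ then forces the opposite corner in the diagonal containing $c$ to lie within $d$-distance $\varepsilon$ of $c$, which is precisely one of the two $d$-values under the $\min$ in \eqref{3:12}. Therefore that minimum is $\leq\varepsilon$, and combined with the opposite inequality from $d_\infty\leq d$ this yields the claimed equality \eqref{3:12}. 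No property of $d_X,d_Y$ beyond the inequalities $d_X(x_1,x_2),d_Y(y_1,y_2)\leq\varepsilon$ is actually used in this step, so the ultrametric assumption enters only implicitly through the availability of \eqref{3:3}.
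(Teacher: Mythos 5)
Your proof is correct and takes essentially the same route as the paper: the same two-point sets $W$, $Z$ and the same critical $\varepsilon=\max\{d_X(x_1,x_2),d_Y(y_1,y_2)\}$, with \eqref{3:3} forcing a one-ball cover of $W\times Z$ centered at a corner. The paper merely phrases it contrapositively (if both diagonal $d$-distances exceeded $\varepsilon$, no corner-centered $\varepsilon$-ball would cover $W\times Z$, so $\mathcal N_\varepsilon(W\times Z)\geq 2$, contradicting \eqref{3:3}), which is the same case check you perform directly.
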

\begin{proof}
Suppose that \eqref{3:12} does not hold for some $x_1,x_2\in X$
and $y_1,y_2\in Y$. Then using the inequality $d_\infty\leq d$ we
see that
\begin{equation}\label{3:13}
d((x_1,y_1),(x_2,y_2))>\max\{d_X(x_1,x_2),d_Y(y_1,y_2)\}
\end{equation}
and
\begin{equation}\label{3:14}
d((x_2,y_1),(x_1,y_2))>\max\{d_X(x_1,x_2),d_Y(y_1,y_2)\}.
\end{equation}
Write
$$
W:=\{x_1,x_2\},\qquad Z:=\{y_1,y_2\},\qquad
\varepsilon:=d_\infty((x_1,x_2),(y_1,y_2)).
$$
Then it is clear that
\begin{equation}\label{3:15}
\mathcal N_\varepsilon(W)=\mathcal N_\varepsilon(Z)=1.
\end{equation}
Moreover, inequalities \eqref{3:13} and \eqref{3:14} imply that
$$
(W\times Z)\setminus B_d((x,y),\varepsilon)\ne\emptyset
$$
for every $(x,y)\in X\times Y$. Consequently, we have
$
\mathcal N_\varepsilon(W\times Z)>1.
$
To complete the proof, it suffices to observe that the last
inequality and \eqref{3:15} contradict \eqref{3:3}.
\end{proof}
\begin{corollary}\label{c:3:8}
Let $(X,d_X)$ and $(Y,d_Y)$ be ultrametric spaces and let $d$ be a
partial distan\-ce-preserving metric on $X\times Y$ such that
$d_\infty\leq d$. Suppose that the equality
\begin{equation}\label{3:16}
d((x_1,y_1),(x_2,y_2))=d((x_2,y_1),(x_1,y_2))
\end{equation}
holds for all $\{x_1,x_2\}\subseteq X$ and all
$\{y_1,y_2\}\subseteq Y$. Then $(X\times Y,d)$ is ultrametric if
and only if \eqref{3:3} holds for all compact sets $W\subseteq X$,
$Z\subseteq Y$ and every $\varepsilon>0$.
\end{corollary}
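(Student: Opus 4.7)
The plan is to treat the two implications separately, with both reductions going through results already established in the paper; the symmetry hypothesis \eqref{3:16} is the new ingredient that turns Proposition \ref{p:3:7} into a clean identification $d=d_\infty$.

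For the direction $(X\times Y,d)$ ultrametric $\Rightarrow$ \eqref{3:3}, I would simply invoke Lemma \ref{l:3:2}. Indeed, since $d$ is a partial distance-preserving ultrametric on $X\times Y$ satisfying $d_\infty\leq d$, all the hypotheses of that lemma are in force (note that $(X,d_X)$ and $(Y,d_Y)$ are already assumed ultrametric here, so Lemma \ref{l:3:5} is not even needed), and \eqref{3:3} is exactly the second asserted equality in the conclusion of Lemma \ref{l:3:2}.

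For the converse, assume \eqref{3:3} holds for all compact $W\subseteq X$, $Z\subseteq Y$ and every $\varepsilon>0$. By Proposition \ref{p:3:7} this forces the min-identity
\begin{equation*}
\min\{d((x_1,y_1),(x_2,y_2)),\,d((x_2,y_1),(x_1,y_2))\}=d_\infty((x_1,y_1),(x_2,y_2))
\end{equation*}
for all $x_1,x_2\in X$ and $y_1,y_2\in Y$. The symmetry assumption \eqref{3:16} says the two quantities inside the minimum are equal, so each of them equals $d_\infty((x_1,y_1),(x_2,y_2))$. Combined with the standing hypothesis $d_\infty\leq d$, this yields $d((x_1,y_1),(x_2,y_2))=d_\infty((x_1,y_1),(x_2,y_2))$ for all pairs of points, i.e.\ $d=d_\infty$. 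Since $d_X$ and $d_Y$ are ultrametric, the first half of the proof of Theorem \ref{t2.8} shows that $d_\infty$ is ultrametric on $X\times Y$, and therefore $d$ is.

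There is essentially no obstacle here: the corollary is really just the observation that the only way Proposition \ref{p:3:7} can fail to collapse to $d=d_\infty$ is through the freedom between $d((x_1,y_1),(x_2,y_2))$ and $d((x_2,y_1),(x_1,y_2))$, and \eqref{3:16} removes precisely that freedom. The only point to double-check is that the ``min equals $d_\infty$'' consequence of Proposition \ref{p:3:7} is being used with the right pairing of points, which is immediate since its statement is symmetric in the choice of first/second coordinates.
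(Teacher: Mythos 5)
Your proposal is correct and follows essentially the same route as the paper: the forward direction via Lemma \ref{l:3:2}, and the converse via Proposition \ref{p:3:7} combined with the symmetry hypothesis \eqref{3:16} to force $d=d_\infty$, after which Theorem \ref{t2.8} gives ultrametricity. The only cosmetic difference is that you spell out why \eqref{3:12} and \eqref{3:16} yield $d=d_\infty$, a step the paper leaves as a one-line remark.
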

\begin{proof}
Suppose that $(X\times Y,d)$ is ultrametric. Then \eqref{3:3}
holds, see Lemma \ref{l:3:2}. Conversely, if \eqref{3:3} holds for
all compact $W\subseteq X$, $Z\subseteq Y$ and every
$\varepsilon>0$, then, by Proposition~\ref{p:3:7}, we have
 \eqref{3:12}, Note that \eqref{3:12} and \eqref{3:16}
imply the equality $d=d_\infty$. Using Theorem \ref{t2.8} we see
that $(X\times Y,d)$ is an ultrametric space.
\end{proof}
\begin{remark}\label{r:3:9}
If the distance function $d:(X\times Y)\times(X\times Y)\to\mathbb
R^+$ depends only on ``partial'' distances $d_X$ and $d_Y$, see
diagram \eqref{2:1.6}, then  \eqref{3:16} evidently holds. Note
that \eqref{3:16} holds for all points from the space $(X\times
Y,d)$ in Example \ref{e2.7} but, in this case, there is no
function $F$ for which diagram \eqref{2:1.6} is commutative.
\end{remark}

{\bf Acknowledgment.} The first author thanks the Department of
Mathematics and Statistics of the University of Helsinki and the
Academy of Finland for the support.

\end{document}